\newtheorem{theorem}{Theorem}[section]
\newtheorem{corollary}[theorem]{Corollary}
\newtheorem{lemma}[theorem]{Lemma}
\newtheorem{observation}[theorem]{Observation}
\theoremstyle{definition}
\newtheorem{definition}[theorem]{Definition}
\newtheorem{question}[theorem]{Question}
\theoremstyle{remark}
\DeclareMathOperator{\ran}{range}
\DeclareMathOperator{\dom}{dom}
\newcommand{\st}{\; | \;}
\newcommand{\set}[2]{\left\{#1\st #2 \right\}}
\newcommand{\seq}[2]{\langle #1 \st #2 \rangle}
\DeclareMathOperator{\Add}{\mathcal A\textit{dd}\,}
\newcommand{\forces}{\Vdash}
\newcommand{\rest}{\mathbin{\upharpoonright}}
\newcommand{\SP}[2]{\ensuremath{\mathsf{Split}_{#1}(#2)}}
\newcommand{\Unbounded}{\ensuremath{\mathsf{unbounded}}}
\newcommand{\Stationary}{\ensuremath{\mathsf{stationary}}}
\newcommand{\Nonempty}{\ensuremath{\mathsf{nonempty}}}
\newcommand{\MaxCard}{\ensuremath{[\kappa]^\kappa}}
\newcommand{\Split}[1]{\ensuremath{\SP{#1}{\Unbounded}}}
\newcommand{\calA}{\mathcal{A}}
\newcommand{\calB}{\mathcal{B}}
\newcommand{\calF}{\mathcal{F}}
\newcommand{\calI}{\mathcal{I}}
\newcommand{\calS}{\mathcal{S}}
\newcommand{\sub}{\subseteq}
\newcommand{\To}{\longrightarrow}
\newcommand{\power}[1]{\mathcal{P}(#1)}
\newcommand{\bkappa}{{\bar{\kappa}}}
\newcommand{\kla}[1]{{\langle #1\rangle}}
\DeclareMathOperator{\dunion}{\raisebox{.5mm}{$\bigtriangledown\!$}}
\DeclareMathOperator{\dintersection}{\mathop{\bigtriangleup}}
\newcommand{\card}[1]{{\mathsf{card}(#1)}}
\newcommand{\V}{\mathrm{V}}
\renewcommand{\phi}{\varphi}
\newcommand{\cf}{\mathrm{cf}}
\newcommand{\balpha}{{\bar{\alpha}}}
\newcommand{\bbeta}{{\bar{\beta}}}
\newcommand{\bdelta}{{\bar{\delta}}}
\newcommand{\bA}{{\bar{A}}}
\newcommand{\bard}{{\bar{d}}}
\newcommand{\leer}{\emptyset}
\newcommand{\ohne}{\setminus}
\newcommand{\sn}{\mathfrak{s}} % splitting number
\newcommand{\tA}{{\tilde{A}}}
\newcommand{\bbM}{{\mathbb{M}}}
\newcommand{\Q}{\mathbb{Q}}
\renewcommand{\P}{\mathbb{P}}
\DeclareMathOperator{\spread}{spread}
\title{Split Principles}
\author{Gunter Fuchs}
\address{The College of Staten Island (CUNY)\\2800 Victory Blvd.~\\Staten Island, NY 10314}
\address{The Graduate Center (CUNY)\\365 5th Avenue, New York, NY 10016}
\email{gunter.fuchs@csi.cuny.edu}
\urladdr{www.math.csi.cuny.edu/~fuchs}
\author{Kaethe Minden}
\address{Bard College at Simon's Rock\\84 Alford Rd\\Great Barrington, MA 01230}
\email{kminden@simons-rock.edu}
\urladdr{https://kaetheminden.wordpress.com/}
\date{\today}
\keywords{Large cardinals, splitting number, splitting families, infinitary combinatorics}
\subjclass{03E55, 03E65}
\begin{document}

\begin{abstract}
We introduce the split principles and show that they bear tight connections to large cardinal properties such as inaccessibility, weak compactness, subtlety, almost ineffability and ineffability, as well as classical combinatorial objects such as Aronszajn trees, Souslin trees or square principles. We exhibit correspondences between certain split principles and splitting numbers at uncountable cardinals.
\end{abstract}  	

\maketitle

% !TEX root = splitk.tex
\section{Introduction}

A version of the split principle was first considered by the first author, Gitman, and Hamkins in the course of their work on \cite{FGH:IncomparableModels}, the intended use being the construction of ultrafilters with certain desirable properties that the authors ultimately were able to do without. But later, the first author observed that the split principle is an ``anti-large-cardinal axiom'' which characterizes the failure of a regular cardinal to be weakly compact. In the present paper, we consider several versions of the principle that provide uniform combinatorial characterizations of the failure of various large cardinal properties, and that have tight connections to other set-theoretic concepts like splitting families and trees.

In its original form, the split principle for a cardinal $\kappa$ postulates the existence of a $\kappa$-list $\vec d = \seq{d_\alpha}{\alpha<\kappa}$ that ``splits" every unbounded subset $A$ of $\kappa$ into two unbounded subsets of $A$, meaning that there is one ordinal $\beta$ such that for unboundedly many $\alpha\in A$, we have that $\beta$ belongs to $d_\alpha$, but also, for unboundedly many $\alpha\in A$, $\beta$ belongs to the relative complement $\alpha\ohne d_\alpha$. By varying the form of the list allowed (it may be a thin list, or just a sequence of subsets of some cardinal $\tau$), the kinds of sets that are split (unbounded subsets of $\kappa$, stationary subsets of $\kappa$; large sets in some sense), and the kinds of sets split into (thus, for example one could consider splitting stationary sets into unbounded sets), we obtain a host of natural split principles.

%The most direct connection to another area of set-theoretic research is revealed by the following realization:
Given a $\kappa$-sequence $\vec{d}$ \emph{of subsets of $\tau$,} we may imagine this sequence as being given by the sequence of vertical sections of a subset of $\kappa\times\tau$. We can also form the set $\calF$ of its horizontal sections, a family of subsets of $\kappa$. The most immediate connection to another area of set-theoretic research is revealed by the  realization that $\vec{d}$ is a split sequence iff the family $\calF$ is a \emph{splitting family}: for every unbounded set $A\sub\kappa$ there is an $X\in\calF$ such that $A\cap X$ and $A\ohne X$ are both unbounded, and this equivalence holds in more generality for any families of large sets instead of the unbounded sets.

Thus, many, but not all of our results can be viewed as talking about splitting families and splitting numbers (splitting sequences of subsets of a fixed $\tau$ can always be translated in this way, but $\kappa$-lists cannot.) However, considering the vertical sections rather than the horizontal ones is a crucial change in perspective. Most notably, it makes a difference if the sequence $\vec{d}$ is thin, meaning that for every $\beta<\tau$, the set $\{d_\alpha\cap\beta\st\alpha<\kappa\}$ has cardinality less than $\kappa$. The corresponding property of a splitting family has not been considered before, to our knowledge. It was the ability to go back and forth between splitting families and split sequences that made this a natural property, and this opens up many other connections as well. Thus, we reveal close connections between split principles and certain large cardinal properties, combinatorial principles such as $\square$, the existence of certain trees, etc. Here is a summary of the contents of this work by section.

In Section \ref{sec:OGSplit}, we introduce the original split principle in detail and focus on its connection to trees. We show its equivalence at a regular cardinal $\kappa$ to the existence of a sequential tree $T\sub{}^{{<}\kappa}2$ of height $\kappa$ with no cofinal branch, and we show that the existence of a $\kappa$-Aronszajn tree is equivalent to the \emph{thin} split principle at $\kappa$.

In Section \ref{sec:SplittingSubsetsOfKappaAndSplittingNo} we show that if the notions of largeness used are reasonable (namely, the largeness of the sets split into is determined by a tail of the sets), then the failure of the split principle at $\kappa$ says that the corresponding splitting number is larger than $\kappa$. We also introduce the split principles concerning $\kappa$-sequences of subsets of some cardinal $\tau$ rather than $\kappa$-lists and show that a principle of this type holds iff there is a splitting family of size $\tau$.

Section \ref{sec:LargeCardinals} contains our characterizations of large cardinal properties. First, in \ref{subsec:inacc} we characterize inaccessibility, and observe that regularity can also be expressed using split principles. In \ref{subsec:wc} we characterize weak compactness by the failure of certain split principles of unbounded sets. In \ref{subsec:Ineffables} we characterize ineffability and almost ineffability by the failure of split principles centered around stationary sets. Interestingly, the split principle characterizing almost ineffabilty talks about splitting unbounded sets into nonempty sets by $\kappa$-lists and does not translate to a statement about splitting numbers. We also show that the ineffable tree property of \cite{Weiss:Diss} is characterized by the failure of a thin split principle. Since splitting unbounded sets and splitting stationary sets were crucial in the characterizations so far, we include some results on the relationships between the corresponding splitting numbers in this section, and we end by showing how to increase the most relevant ones by forcing over a model with a supercompact cardinal.
Finally, in \ref{subsec:Subtles}, we present a characterization of subtlety, as well as a characterization of the existence of a $\kappa$-Souslin tree.

Next, in Section \ref{sec:singular} we consider the split principles, splitting families and splitting numbers at a singular cardinal $\kappa$. Roughly speaking, we develop methods to transfer split sequences between $\kappa$ and $\cf(\kappa)$ that work in the case of stationary sets and unbounded sets, but work only in one direction in the case of the subsets of maximal cardinality, unless the sequence in question is thin. This is why we are able to resolve questions about the splitting numbers of the unbounded and the stationary subsets of a singular $\kappa$ that are open about the sets of maximal cardinality, unless one restricts attention to thinness. A main result is that is consistent that the splitting number of the stationary subsets of a singular cardinal $\kappa$ of uncountable cofinality is greater than $\kappa$, and the same is true of the splitting number of the unbounded subsets of a singular $\kappa$, and of the ``thin'' splitting number of $[\kappa]^\kappa$. That is, the corresponding split principles fail at $\kappa$, and the thin split principle for $[\kappa]^\kappa$ also fails at $\kappa$. We also have many results about comparisons between various splitting numbers at a singular cardinal.

%The following is a summary of some of the main results illustrating how split principles may be thought of as ``anti-large-cardinal axioms" and can be used to characterize various large cardinals.
%\begin{theorem} Let $\kappa$ be a  regular cardinal. Then
%\begin{enumerate}[label=\textnormal{(\arabic*)}]
%	\item $\kappa$ is not inaccessible $\iff$ for some $\tau<\kappa$, $\SP{\kappa,\tau}{\Unbounded}$ holds.
%	\item $\kappa$ is not weakly compact $\iff$ $\Split{\kappa}$ holds.
%	\item $\kappa$ is not ineffable $\iff$ $\SP{\kappa}{\Stationary}$ holds.
%	\item $\kappa$ is not subtle $\iff$ there is a club $C\sub\kappa$ such that $\SP{\kappa}{[\kappa]^2 \rest C,\Nonempty}$ holds.
%\end{enumerate}
%\end{theorem}

%%%%Do we want to be more precise about this here? 
% !TEX root = splitk.tex
\section{The original split principle}
\label{sec:OGSplit}
Before introducing split principles in complete generality, let us first introduce the original split principle, which characterizes the failure of having a branch through any tree in a kind of ``positive" way. Indeed, split principles on regular cardinals can be thought of loosely as anti-branch properties.

Let $\kappa$ be a cardinal.
We shall use the terminology of \cite{Weiss:Diss}, and refer to a sequence of the form $\seq{ d_\alpha }{ \alpha < \kappa }$ as a \emph{$\kappa$-list} if for all $\alpha<\kappa$, $d_\alpha \subseteq \alpha$.

\begin{definition}
\label{definition:KappaSplitPrinciple}
Let $\kappa$ be a cardinal. The principle $\Split{\kappa}$ says that there is a $\kappa$-list $\vec{d}$ that \emph{splits unbounded sets,} meaning that for every $A \subseteq \kappa$ unbounded, there is a $\beta<\kappa$ such that both $$A^+_{\beta,\vec{d}}=A^+_\beta=\{\alpha\in A\st\beta\in d_\alpha\} \text{ and } A^-_{\beta,\vec{d}}=A^-_\beta=\{\alpha\in A\st\beta\in\alpha\ohne d_\alpha\}$$ are unbounded. In this case, we also say that $\beta$ splits $A$ into unbounded sets with respect to $\vec{d}$.
\end{definition}

\begin{figure}[h]
\begin{tikzpicture}
 \draw[->, densely dotted] (-3,-1.3)--(3,-1.3);
 \node at (-3,-1.3) {$-$}; \node [left] at (-3,-1.3) {$\beta$};

\draw[->] (-3,-3)--(3,-3);
\draw[->] (-3,-3)--(-3,3);
\draw[->, dotted] (-3,-3)--(3,3);

\node [left] at (-3,3) {$\kappa$};
\node [below] at (3,-3) {$\kappa$};

%\node at (-2,-3) {$\circ$};
\node at (2,-3) {$|$}; \node [below] at (2,-3.1) {$\gamma \in A_\beta^-$};
\draw[fill, color=gray] (-2,-2.5) circle [x radius = 0.1cm, y radius=0.2cm];

\node at (-3,2) {$-$}; \node [left] at (-3,2) {$\gamma$};
\node [above]  at (2,2) {$d_\gamma$};

\draw[fill, color=gray] (-1,-2.2) circle [x radius = 0.15cm, y radius=0.7cm];
\node at (-1,-3) {$\circ$};

\node at (0,-3) {$\bullet$}; \node at (0,-3) {$|$}; \node [below] at (0,-3.1) {$\alpha \in A_\beta^+$};
\draw [fill, color=gray] (0,-1) circle [x radius=0.15cm, y radius=0.7cm];
\draw[fill, color=gray] (0,-2.5) circle [x radius = 0.1cm, y radius=0.3cm];
\node at (-3,0) {$-$}; \node [left] at (-3,0) {$\alpha$};
\node [above] at (0,0) {$d_\alpha$};

\node at (-0.5,-3) {$\circ$};

\draw [fill, color=gray] (1,-1) circle [x radius=0.2cm, y radius=1.8cm];
\node  at (1,-3) {$\bullet$};

\draw[fill, color=gray] (2,1.5) circle [x radius = 0.1cm, y radius=0.3cm];
\node[color=gray] at (2, 1) {$\bullet$};
\node at (2,-3) {$\circ$};

\draw [fill, color=gray] (2.5,-1.5) circle [x radius=0.13cm, y radius=0.5cm];
\node at (2.5,-3) {$\bullet$};

\node at (2.7,-3) {$\circ$};
\end{tikzpicture}

\caption{Here $\beta$ splits $A$ into unbounded sets with respect to a $\kappa$-list $\vec{d}$.}
\label{figure:splitkappa}
\end{figure}

In Figure \ref{figure:splitkappa}, an attempt to picture a split sequence visually is given. Some of the elements of a $\kappa$-list $\vec d$ are represented, by blobs. The focus is on indices of $\vec d$ that lie in $A$, an unbounded subset of $\kappa$. For any $\beta<\kappa$, each element $\alpha$ of $A\setminus(\beta+1)$ falls into either $A^+_\beta$ or $A^-_\beta$, depending on whether $\beta$ is an element of $d_\alpha$ or not. So $A$ is the disjoint union $(A\cap(\beta+1))\cup A^+_\beta\cup A^-_\beta$. We say that $\beta$ splits $A$ into unbounded sets with respect to $\vec d$ if both $A^+_\beta$ and $A^-_\beta$ are unbounded subsets of $\kappa$.

%Notice that a sequence $\vec{d}$ witnessing $\Split{\kappa}$ cannot be a $\lozenge_\kappa$-sequence. If it was a diamond sequence, then in particular $S=\set{\alpha<\kappa}{d_\alpha=\alpha}$ would be stationary, and could be split by some $\beta<\kappa$ into two unbounded sets, $S^-_\beta=\set{\alpha\in S}{\beta\in\alpha\setminus\alpha}$ and $S^+_\beta=\set{\alpha\in S}{\beta<\alpha}$. This is a contradiction.

%The diamond sequence has to coherently guess $\beta$, and can't go back and forth in deciding whether it's in or out.
%However, it does seem as though a $\Split{\kappa}$ sequence could be a $\Box_{\kappa^+}$-sequence, not sure if this is worth pointing out.

%This original split principle $\Split{\kappa}$ essentially says that there is a $\kappa$-list that cannot possibly have a branch, for a specific reason: there is a kind of ``bad" $\beta <\kappa$ which can never decide whether it will be in any branch or not---it resists coherence. We make this more precise in what follows.

Recall that a regular cardinal $\kappa$ is \emph{weakly compact} if $\kappa$ is inaccessible and the tree property holds at $\kappa$, that is, every $\kappa$-tree has a cofinal branch, where a $\kappa$-tree is a tree of height $\kappa$ all of whose levels have size less than $\kappa$.
We will show in section \ref{subsec:wc} that a regular cardinal $\kappa$ is weakly compact if and only if $\SP{\kappa}{\Unbounded}$ fails. To this end, we will first define the analogue of the tree property for $\kappa$-lists.

\begin{definition}
\label{def:CofinalBranchOfAKappaList}
A $\kappa$-list $\vec d = \seq{ d_\alpha }{ \alpha < \kappa }$ has a \emph{cofinal branch}, or has a $\kappa$-branch, so long as there is
a $b \subseteq \kappa$ such that for all $\gamma < \kappa$ there is an $\alpha \geq \gamma$ such that
$d_\alpha \cap \gamma = b \cap \gamma.$

%We say that the branch property $\BP(\kappa)$ holds if every $\kappa$-list has a cofinal branch.

Given a $\kappa$-list $\vec d = \seq{ d_\alpha }{ \alpha < \kappa }$, for each $\alpha < \kappa$ let $d^c_\alpha: \alpha \to 2$ denote the characteristic function of $d_\alpha$. The sequential tree corresponding to the $\kappa$-list is given by
$T_{\vec d} = \set{ d^c_\alpha \rest \beta }{\beta\le\alpha<\kappa}$,
and the tree ordering is set inclusion. As is customary, we refer to a function
$b: \kappa \to 2$ as a (cofinal) branch through $T_{\vec d}$ if for all $\gamma < \kappa$, $b\rest\gamma\in T_{\vec{d}}$, which means that for every $\gamma<\kappa$, there is an $\alpha\ge\gamma$ such that
$b \rest \gamma = d^c_\alpha \rest \gamma.$
 \end{definition}

\begin{observation}
\label{observation:ListBranchCorrespondsToTreeBranch}
A $\kappa$-list $\vec d$ has a cofinal branch if and only if the corresponding tree $T_{\vec{d}}$ has a cofinal branch.
\end{observation}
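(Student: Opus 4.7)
The plan is to prove both directions by the standard correspondence between subsets of $\kappa$ and their characteristic functions $\kappa \to 2$, verifying that the defining condition of a branch through $\vec d$ translates verbatim into the defining condition of a cofinal branch through $T_{\vec d}$. Concretely, for any $b \subseteq \kappa$ with characteristic function $b^c : \kappa \to 2$, and for any $\gamma < \kappa$ and $\alpha \ge \gamma$, the key identity is
\[
d_\alpha \cap \gamma = b \cap \gamma \iff d^c_\alpha \rest \gamma = b^c \rest \gamma,
\]
which follows directly from the definition of the characteristic function.

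For the forward direction, I would start with a $\kappa$-branch $b \subseteq \kappa$ for $\vec d$ and set $\tilde b := b^c$. Given any $\gamma < \kappa$, by assumption there is some $\alpha \ge \gamma$ with $d_\alpha \cap \gamma = b \cap \gamma$; by the key identity, this gives $\tilde b \rest \gamma = d^c_\alpha \rest \gamma$, so $\tilde b \rest \gamma \in T_{\vec d}$. Hence $\tilde b$ is a cofinal branch through $T_{\vec d}$. For the converse, start with a cofinal branch $b : \kappa \to 2$ through $T_{\vec d}$ and let $B := \{\beta < \kappa \st b(\beta) = 1\}$, so that $b = B^c$. For each $\gamma < \kappa$, the branch condition gives some $\alpha \ge \gamma$ with $b \rest \gamma = d^c_\alpha \rest \gamma$, and again the key identity yields $B \cap \gamma = d_\alpha \cap \gamma$, showing $B$ is a cofinal branch for $\vec d$.

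There is no genuine obstacle here; the observation is purely a matter of unfolding definitions and noting the bijective correspondence between subsets of $\kappa$ and functions $\kappa \to 2$, with the only point requiring even a moment's thought being that in both definitions the witness $\alpha$ is only required to satisfy $\alpha \ge \gamma$ (not to equal $\gamma$), so that the two formulations match on the nose.
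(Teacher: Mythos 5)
Your proof is correct and is exactly the definitional unfolding the paper has in mind; the paper states this as an observation without proof, precisely because the two branch conditions are term-for-term translations of each other under the subset/characteristic-function correspondence, as you verify.
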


It turns out that for regular $\kappa$, the properties of a $\kappa$-list of splitting unbounded sets and having a cofinal branch are complementary.

\begin{theorem}
\label{theorem:ListSplitsIffItHasNoBranch}
Let $\kappa$ be regular, and let $\vec{d}$ be a $\kappa$-list. The following are equivalent:
\begin{enumerate}[label=\textnormal{(\arabic*)}]
  \item $\vec{d}$ witnesses $\SP{\kappa}{\Unbounded}$.
  \item $\vec{d}$ does \emph{not} have a cofinal branch.
\end{enumerate}
\end{theorem}

\begin{proof}
(1)$\implies$(2): Assume towards a contradiction that $\vec{d}$ has a cofinal branch $b \subseteq \kappa$, and that $\vec d$ splits unbounded sets. We will first define a function $f:\kappa \To \kappa$ as follows:
\[f(\gamma) =\text{the least $\alpha \geq \gamma$ such that $b\cap\gamma = d_\alpha \cap \gamma.$}\]
Note that $f$ is weakly increasing, thus letting $A = f``\kappa$, $A$ is unbounded in $\kappa$. So there is $\beta < \kappa$ which splits $A$ (with respect to $\vec{d}$), i.e., both of the sets
$A_\beta^+ = \set{ f(\gamma) \in A }{ \beta \in d_{f(\gamma)} }$ and $A_\beta^- = \set{ f(\gamma) \in A }{ \beta \in f(\gamma) \setminus d_{f(\gamma)} }$
are unbounded in $\kappa$. There are two cases.
		 	
\emph{Case 1: $\beta \notin b$.} Since $A_\beta^+$ is unbounded, we may choose  $f(\gamma) \in A_\beta^+$ satisfying $f(\gamma)>f(\beta)$. By the weak monotonicity of $f$, it follows that $\gamma>\beta$. Then $\beta \in d_{f(\gamma)}$ by the definition of $A_\beta^+$ and it follows that
			$\beta \in d_{f(\gamma)} \cap \gamma= b \cap \gamma,$
contradicting the assumption that $\beta \notin b$.
		
\emph{Case 2: $\beta \in b$.} We may run a similar argument to the previous case in order to obtain a contradiction. In this case, we use that $A_\beta^-$ is unbounded to choose $f(\gamma) \in A_\beta^-$ satisfying $f(\gamma) > f(\beta)$, so that $\gamma>\beta$, and get the contradiction that $\beta\notin d_{f(\gamma)}$ while $\beta \in b\cap\gamma=d_{f(\gamma)} \cap \gamma$.
	
$\neg(1)$$\implies$$\neg(2)$: Assume that $\vec{d}$ does not split unbounded sets. We will show that then $\vec{d}$ has a cofinal branch. Let $A \subseteq \kappa$ be unbounded such that no $\beta < \kappa$ splits $A$ (with respect to $\vec{d}$). Thus, for each $\beta < \kappa$, exactly one of $A^+_\beta$ or $A^-_\beta$ is bounded in $\kappa$, since $A\ohne(\beta+1)= A_\beta^+ \cup A_\beta^-$ (so since $A$ is unbounded, it can't be that both $A^+_\beta$ and $A^-_\beta$ are bounded). Now we may define our branch $b\subseteq\kappa$ as follows:
\[\beta \in b \iff A_\beta^- \text{ is bounded in $\kappa$. }\]
To see that this works, note that for each $\beta < \kappa$ there is a least $\xi_\beta < \kappa$ such that either:
		$\beta \in d_\delta$ for all $\delta \in A \setminus \xi_\beta$  or $\beta \notin d_\delta$ for all $\delta \in A \setminus \xi_\beta$.
%The first scenario is the situation where $\beta \in b$, and the second scenario is where $\beta \notin b$.

Letting $\gamma < \kappa$ be arbitrary, using the fact that $\kappa$ is regular, there is an $\alpha \in A$ such that
$\alpha > \sup_{\beta < \gamma} \xi_\beta.$
It follows that $b \cap \gamma = d_\alpha \cap \gamma$.
To see this, let $\beta < \gamma$. We have two cases.

\emph{Case 1: $\beta \in b$.} Then $A^-_\beta$ is bounded, so $\beta\in d_\alpha$, since $\alpha\in A\setminus\xi_\beta$.

\emph{Case 2: $\beta \notin b$.} Then $A^+_\beta$ is bounded, so $\beta \notin d_\alpha$, since $\alpha \in A \setminus \xi_\beta$.

So $b$ is a cofinal branch.	
\end{proof}

\begin{corollary}
\label{corollary:TreeCharacterizationIfKappaIsRegular}
Let $\kappa$ be regular. Then $\SP{\kappa}{\Unbounded}$ holds iff there is a sequential tree $T\sub{}^{{<}\kappa}2$ of height $\kappa$ that has no cofinal branch.
\end{corollary}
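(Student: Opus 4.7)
The plan is to derive this corollary as a direct consequence of Theorem~\ref{theorem:ListSplitsIffItHasNoBranch} together with Observation~\ref{observation:ListBranchCorrespondsToTreeBranch}, since both implications amount to translating between $\kappa$-lists and their associated sequential trees.

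For the forward direction, I would assume $\SP{\kappa}{\Unbounded}$ holds and fix a witnessing $\kappa$-list $\vec d$. By Theorem~\ref{theorem:ListSplitsIffItHasNoBranch}, $\vec d$ has no cofinal branch, and by Observation~\ref{observation:ListBranchCorrespondsToTreeBranch} the same is true of the associated sequential tree $T_{\vec d}\sub{}^{<\kappa}2$. To see that $T_{\vec d}$ has height $\kappa$, note that for each $\beta<\kappa$ the function $d^c_\beta$ itself belongs to $T_{\vec d}$ and has domain $\beta$, so $T_{\vec d}$ has non-empty levels cofinally in $\kappa$; thus $T_{\vec d}$ is a sequential tree of height $\kappa$ without a cofinal branch.

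For the reverse direction, I start with a sequential tree $T\sub{}^{<\kappa}2$ of height $\kappa$ without a cofinal branch, and aim to build a $\kappa$-list $\vec d$ whose associated tree $T_{\vec d}$ is contained in $T$; the absence of cofinal branches then transfers downward, and the Observation and Theorem give $\SP{\kappa}{\Unbounded}$. To construct $\vec d$, I would use that $T$ has height $\kappa$ and is closed under initial segments: for each $\alpha<\kappa$ I can pick some $t_\alpha\in T$ with $\dom(t_\alpha)=\alpha$. Defining $d_\alpha=\{\xi<\alpha\st t_\alpha(\xi)=1\}$ makes $d^c_\alpha$ equal to $t_\alpha$, so every element $d^c_\alpha\rest\beta$ of $T_{\vec d}$ is an initial segment of $t_\alpha\in T$ and hence lies in $T$ by closure. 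Thus $T_{\vec d}\sub T$ has no cofinal branch, and the preceding results conclude the argument.

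There is no serious obstacle; the only care points are confirming that $T_{\vec d}$ has height $\kappa$ in the first direction and invoking closure of $T$ under initial segments to get $T_{\vec d}\sub T$ in the second.
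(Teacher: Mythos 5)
Your proposal is correct and follows essentially the same route as the paper: both directions are obtained by translating between a $\kappa$-list and its sequential tree via Theorem~\ref{theorem:ListSplitsIffItHasNoBranch} and Observation~\ref{observation:ListBranchCorrespondsToTreeBranch}, with the reverse direction choosing one node $t_\alpha$ per level so that $T_{\vec d}\sub T$. The only difference is that you explicitly verify $T_{\vec d}$ has height $\kappa$, a small care point the paper leaves implicit.
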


\begin{proof}
For the direction from right to left, let $T$ be a sequential tree $T\sub{}^{{<}\kappa}2$ of height $\kappa$ that has no cofinal branch. For each $\alpha<\kappa$, let $s_\alpha$ be a node at the $\alpha$-th level of $T$, i.e., a sequence $s_\alpha:\alpha\To 2$ with $s_\alpha\in T$. Let $d_\alpha$ be the sequence $s_\alpha$, viewed as a subset of $\alpha$, i.e., $d_\alpha=\{\gamma<\alpha\st s_\alpha(\gamma)=1\}$. In other words, $s_\alpha=d^c_\alpha$. Then $T_{\vec{d}}\sub T$, and so, since $T$ does not have a cofinal branch, $T_{\vec{d}}$ has no cofinal branch, which means, by Observation \ref{observation:ListBranchCorrespondsToTreeBranch}, that $\vec{d}$ has no cofinal branch, and this is equivalent to saying that $\vec{d}$ splits unbounded sets, by Theorem \ref{theorem:ListSplitsIffItHasNoBranch}. So $\SP{\kappa}{\Unbounded}$ holds.

For the converse, let $\vec{d}$ be a $\SP{\kappa}{\Unbounded}$-sequence. By Theorem \ref{theorem:ListSplitsIffItHasNoBranch}, $\vec{d}$ has no cofinal branch. So by Observation \ref{observation:ListBranchCorrespondsToTreeBranch}, $T_{\vec{d}}$ does not have a cofinal branch, so $T_{\vec{d}}$ is as wished.
\end{proof}

%In fact, it turns out that the original split principle $\Split{\kappa}$ is equivalent to the failure of $\kappa$ to be weakly compact. We will see this in Section \ref{subsec:wc} once we first generalize the split principle and see that $\kappa$ will automatically be inaccessible if the original split principle for $\kappa$ fails.

Note that a sequential tree $T\sub{}^{{<}\kappa}2$ of height $\kappa$ without a cofinal branch is not necessarily a $\kappa$-Aronszajn tree, as it may have levels of size $\kappa$. Of course, if $\kappa$ is inaccessible, then such a $T$ is automatically Aronszajn. However, there is a natural class of $\kappa$-lists which give rise to $\kappa$-trees. These are called \emph{thin}, and they allow us to formulate a more stringent split principle that corresponds directly to the existence of $\kappa$-Aronszajn trees.

\begin{definition}[{\cite[Def.~1.2.4]{Weiss:Diss}}]
\label{definition:ThinSplit}
For a cardinal $\kappa$, a $\kappa$-list $\vec{d}=\seq{d_\alpha}{\alpha<\kappa}$ is \emph{thin} if for every $\beta<\kappa$, the set $\{d_\alpha\cap\beta\st\alpha<\kappa\}$ has cardinality less than $\kappa$.

The principle \emph{thin $\Split{\kappa}$} says that $\Split{\kappa}$ holds, as witnessed by a \emph{thin} $\kappa$-list.
\end{definition}

The motivation for the concept of a thin list is the following easy observation.

\begin{observation}
\label{obs:ThinListsCorrespondToKappaTrees}
Let $\kappa$ be an infinite cardinal, and let $\vec{d}$ be a $\kappa$-list. Then $\vec{d}$ is thin iff $T_{\vec{d}}$ is a $\kappa$-tree.
\end{observation}

We obtain the following version of Theorem \ref{theorem:ListSplitsIffItHasNoBranch}.

\begin{theorem}
\label{theorem:ThinListSplitsIffItsTreeIsAronszajn}
Let $\kappa$ be regular, and let $\vec{d}$ be a $\kappa$-list. The following are equivalent:
\begin{enumerate}[label=\textnormal{(\arabic*)}]
  \item $\vec{d}$ is a thin $\SP{\kappa}{\Unbounded}$ sequence.
  \item $T_{\vec{d}}$ is a $\kappa$-Aronszajn tree.
\end{enumerate}
\end{theorem}

\begin{proof}
Immediate, given Theorem \ref{theorem:ListSplitsIffItHasNoBranch}.
\end{proof}

Hence, we arrive at:

\begin{theorem}
\label{thm:ThinSplitIffAronszajnTreeExistsAtRegularKappa}
Let $\kappa$ be a regular cardinal. Then the following are equivalent:
\begin{enumerate}[label=\textnormal{(\arabic*)}]
\item
\label{item:ThinSplitHolds}
Thin $\Split{\kappa}$ holds.
\item
\label{item:AT}
There is a $\kappa$-Aronszajn tree.
\end{enumerate}
\end{theorem}

\begin{proof}
\ref{item:ThinSplitHolds}$\implies$\ref{item:AT}: Given a thin $\Split{\kappa}$-sequence $\vec{d}$, $T_{\vec{d}}$ is a $\kappa$-Aronszajn tree, by Theorem \ref{theorem:ThinListSplitsIffItsTreeIsAronszajn}.

\ref{item:AT}$\implies$\ref{item:ThinSplitHolds}: Suppose $S^0$ is a $\kappa$-Aronszajn tree. By \cite[Lemma III.5.27 \& Ex.~III.5.28]{Kunen:SetTheoryNew}, there is then a well-pruned, Hausdorff $\kappa$-Aronszajn tree, call it $S^1$.
%By passing to a subtree if necessary, we may assume that $S^1$ is rooted. --- Kunen's well-pruned trees are automatically rooted (see Lemma III.5.26)
By \cite[Ex.~III.5.42]{Kunen:SetTheoryNew}, $S^1$ is isomorphic to a subtree $S^2$ of some sequential tree $B^{{<}\kappa}$. Since the cardinality of $S^1$ is $\kappa$, we may assume that $B=\kappa$. By adding intermediate nodes to $S^2$, we may construct from $S^2$ a $\kappa$-Aronszajn tree $T$ which is a subtree of $2^{{<}\kappa}$.
%[? More detail ?] -- Don't think it's necessary.
Now the construction of the proof of the backwards direction of Corollary \ref{corollary:TreeCharacterizationIfKappaIsRegular} yields a $\Split{\kappa}$-sequence $\vec{d}$ such that $T_{\vec{d}}\sub T$. This implies that $T_{\vec{d}}$ is a $\kappa$-tree, and hence that $\vec{d}$ is thin, by Observation \ref{obs:ThinListsCorrespondToKappaTrees}.
\end{proof} 
% !TEX root = Splitk.tex
\section{Splitting subsets of $\kappa$ and the Splitting Number}
\label{sec:SplittingSubsetsOfKappaAndSplittingNo}

Let us now generalize the original split principle by introducing some parameters that can be changed. First, we add some freedom by allowing different notions of largeness; for subsets of $\kappa$ to split and split into.

\begin{definition}
\label{definition:KappaSplitPrincipleWithParameters}
Let $\calA$ and $\calB$ be families of subsets of the cardinal $\kappa$. The principle $\SP{\kappa}{\calA,\calB}$ says that there is a $\kappa$-list $\vec{d}$ that \emph{splits $\calA$ into $\calB$,} meaning that for every $A\in\calA$, there is a $\beta<\kappa$ such that both $A^+_{\beta,\vec{d}}=A^+_\beta=\{\alpha\in A\st\beta\in d_\alpha\}$ and $A^-_{\beta,\vec{d}}=A^-_\beta=\{\alpha\in A\st\beta\in\alpha\ohne d_\alpha\}$ are in $\calB$. In this case, we also say that $\beta$ splits $\calA$ into $\calB$ with respect to $\vec{d}$.

We abbreviate $\SP{\kappa}{\calA,\calA}$ by $\SP{\kappa}{\calA}$ .

The collections of all unbounded, all stationary, all $\kappa$-sized and all nonempty subsets of $\kappa$ are denoted by $\Unbounded_\kappa$, $\Stationary_\kappa$, $\MaxCard$ and $\Nonempty_\kappa$, respectively. When it is clear from context, we will drop the subscript $\kappa$.

We say that $\calA$ is \emph{closed under supersets} if whenever $A\in\calA$ and $A\sub B\sub\kappa$, then $B\in\calA$.
\end{definition}

The idea is that $\calA$ and $\calB$ are collections of ``large'' sets (in particular, these families of sets will usually be closed under supersets), and $\SP{\kappa}{\calA,\calB}$ says that there is one $\kappa$-list that can split any set that's large in the sense of $\calA$ into two disjoint sets that are large in the sense of $\calB$, in the uniform way described in the definition. In most cases, the families we consider will be related to a natural ideal on $\kappa$. In this context, when $\mathcal{I}\sub\power{\kappa}$ is an ideal on $\kappa$, we say that $\mathcal{I}$ is proper if $\kappa\notin\mathcal{I}$, it is $\tau$-complete, for some cardinal $\tau$, if whenever $\theta<\tau$ and $\seq{N_\xi}{\xi<\theta}$ is a sequence of members of $\mathcal{I}$, then $\bigcup_{\xi<\theta}N_\xi\in\mathcal{I}$. We denote by $\mathcal{I}^+$ the collection of $\mathcal{I}$-positive sets, that is, $\power{\kappa}\ohne\mathcal{I}$. The collections we are interested in will often be of this form. (All those mentioned in Definition \ref{definition:KappaSplitPrincipleWithParameters} are.)

\begin{observation}
\label{obs:Basics}
Let $\kappa$ be a cardinal, let $\calA,\calA',\calB,\calB'\sub\power{\kappa}$, and suppose that $\SP{\kappa}{\calA,\calB}$ holds.
\begin{enumerate}[label=\textnormal{(\arabic*)}]
  \item
  \label{item:TrivialTuning}
  If $\calA'\sub\calA$, $\calB\sub\calB'$, then $\SP{\kappa}{\calA',\calB'}$ holds.
  \item
  \label{item:AcontainedinB}
  If $\calB$ is closed under supersets, then $\calA\sub\calB$.
\end{enumerate}
\end{observation}

\begin{proof}
Part \ref{item:TrivialTuning} is obvious. For \ref{item:AcontainedinB}, fix a $\kappa$-list $\vec{d}$ witnessing that $\SP{\kappa}{\calA,\calB}$ holds. Let $A\in\calA$, and pick $\beta<\kappa$ such that $A^+_{\beta,\vec{d}}, A^-_{\beta,\vec{d}}\in\calB$. Since $A^+_{\beta,\vec{d}}\sub A$, it follows that $A\in\calB$.
\end{proof}

Hence, focusing on families that are closed under supersets, and fixing such an $\calA\sub\power{\kappa}$, the \emph{strongest possible} split principle at $\kappa$ with $\calA$ as the first parameter is the ``symmetric principle'' $\SP{\kappa}{\calA,\calA}$, which we write as $\SP{\kappa}{\calA}$. At the other end of the spectrum, the \emph{weakest possible} principle would be $\SP{\kappa}{\calA,\power{\kappa}}$, but it is not of interest, because it holds trivially. The weakest principle of interest results from removing one set from $\power{\kappa}$, namely $\leer$: $\SP{\kappa}{\calA,\Nonempty}$. Thus principles of the form $\SP{\kappa}{\calA}$ and $\SP{\kappa}{\calA,\Nonempty}$ figure prominently in this work.

We have already seen that another parameter that can be adjusted concerns the kinds of ``lists'' that are used to split, when we considered the \emph{thin} $\Split{\kappa}$ principle. In the following, we allow sequences of sets of ordinals which aren't $\kappa$-lists. It will turn out that the resulting principles bear a very close connection to the concept of splitting families.

\begin{definition}
\label{definition:GeneralNonsenseOnKappaSplitting}
Let $\kappa$ and $\tau$ be cardinals, and let $\calA$ and $\calB$ be families of subsets of $\kappa$. The principle $\SP{\kappa,\tau}{\calA,\calB}$ says: there is a $\kappa$-sequence $\vec{d}=\seq{d_\alpha}{\alpha<\kappa}$ of subsets of $\tau$ such that for every $A\in\calA$, there is an ordinal $\beta<\tau$ such that the sets $\tA^+_\beta=\tA^+_{\beta,\vec{d}}=\{\alpha\in A\st\beta\in d_\alpha\}$ and $\tA^-_\beta=\tA^-_{\beta,\vec{d}}=\{\alpha\in A\st\beta\notin d_\alpha\}$ belong to $\calB$. Such a sequence is called a \emph{$\SP{\kappa,\tau}{\calA,\calB}$ sequence.} %%%%%%%%%%%%Or is it okay to have $\beta \notin d_\alpha$?

If $\calA=\calB$, we show only one argument in the notation. Thus, $\SP{\kappa,\tau}{\calA,\calA}$ is $\SP{\kappa,\tau}{\calA}$.

%%%%%%%%%%Maybe general thinness can be defined here?
%Moreover, a $\kappa$-sequence $\vec{d}=\seq{d_\alpha}{\alpha<\kappa}$ of subsets of $\tau$ is \emph{thin} if for every $\xi<\tau$, the set $\{d_\alpha\cap\xi\st \alpha<\kappa\}$ has cardinality less than $\kappa$.
%The principle \emph{thin \emph{$\SP{\kappa,\tau}{\calA,\calB}$} says that \emph{$\SP{\kappa,\tau}{\calA,\calB}$ holds, as witnessed by a \emph{thin} $\kappa$-sequence.

In this context, a family $\calF\sub\power{\kappa}$ is an \emph{$(\calA,\calB)$-splitting family for $\kappa$} if for every $A\in\calA$, there is an $S\in\calF$ such that both $A\cap S$ and $A\ohne S$ belong to $\calB$. The $(\calA,\calB)$-\emph{splitting number}, denoted $\sn_{\calA,\calB}(\kappa)$, is the least size of an $(\calA,\calB)$-splitting family. We write $\sn_{\calA}(\kappa)$ for $\sn_{\calA,\calA}(\kappa)$. $\sn(\kappa)$ stands for $\sn_{\MaxCard}(\kappa)$.
\end{definition}

We refer to the split principles of the form $\SP{\kappa,\tau}{\calA,\calB}$ as \emph{two cardinal split principles}. The obvious version of Observation \ref{obs:Basics} applies to the principles of the form $\SP{\kappa,\tau}{\calA,\calB}$ as well. In addition, increasing $\tau$ weakens the principle. Note that if $\calB$ is closed under supersets and $\calF$ is an $(\calA,\calB)$-splitting family, then so is $\calF\cap\calB$, because given $A\in\calA$, let $S\in\calF$ be such that $A\cap S\in\calB$. Then $A\cap S\sub S$, so $S\in\calB$, i.e., $S\in\calF\cap\calB$. Thus, if $\calB$ is closed under supersets, which is always true in the cases we consider, then we can always assume that an $(\calA,\calB)$-splitting family is a subfamily of $\calB$.
This is in line with the usual definition of the well-known cardinal characteristic $\sn=\sn(\omega)$, the least cardinal $\theta$ such that there is a family $\calF$ of \emph{infinite subsets} of $\omega$ that splits infinite sets into infinite sets.

We will now explore the relationships to the previously introduced split principles. First, we will observe that in many cases, the two cardinal split principle at $\kappa,\kappa$ is equivalent to the one cardinal split principle at $\kappa$.

\begin{observation} \label{observation:IndependentOfInitialSegmentsGivesSplitEquivalentToGeneralSplit}
Let $\kappa$ be a cardinal, and let $\calA$ and $\calB$ be collections of subsets of $\kappa$, where $\calB$ is a \emph{tail set,} that is, for all $B\sub\kappa$ and all $\beta<\kappa$, $B\in\calB$ iff $B\ohne\beta\in\calB$. Then $\SP{\kappa,\kappa}{\calA,\calB}$ is equivalent to
$\SP{\kappa}{\calA,\calB}$.
\end{observation}

\begin{proof} If $\vec{e}$ is a $\SP{\kappa}{\calA,\calB}$-sequence, then it is also a $\SP{\kappa,\kappa}{\calA,\calB}$-sequence. This is because for $A\sub\kappa$ and $\beta<\kappa$, $A^+_{\beta,\vec{e}}=\tilde{A}^+_{\beta,\vec{e}}$, and $A^-_{\beta,\vec{e}}=\{\alpha\in A\st\beta\in\alpha\ohne e_\alpha\}=\{\alpha\in A\st\beta\notin e_\alpha\}\cap\{\alpha\in A\st \beta\in\alpha\}=\tilde{A}^-_{\beta,\vec{e}}\ohne(\beta+1)$. Thus, given $A\in\calA$, we can find a $\beta<\kappa$ such that both $\tilde{A}^+_{\beta,\vec{e}}=A^+_{\beta,\vec{e}}$ and $A^-_{\beta,\vec{e}}$ belong to $\calB$. But then $\tilde{A}^-_{\beta,\vec{e}}$ is also in $\calB$, because $\calB$ is a tail set.

For the other direction, let $\vec{d}$ be a $\SP{\kappa,\kappa}{\calA,\calB}$ sequence. Define the $\kappa$-list $\vec{e}$ by $e_\alpha=d_\alpha\cap\alpha$. It follows that $\vec{e}$ is a $\SP{\kappa}{\calA,\calB}$ sequence, because if $A\in\calA$ and $\beta<\kappa$ is such that $\tA^+_{\beta,\vec{d}}$ and $\tA^-_{\beta,\vec{d}}$ are in $\calB$ by $\SP{\kappa,\kappa}{\calA,\calB}$, then $A^+_{\beta,\vec{e}}=\tA^+_{\beta,\vec{d}}\ohne(\beta+1)$, and similarly, $A^-_{\beta,\vec{e}}=\tA^-_{\beta,\vec{d}}\ohne(\beta+1)$. It follows from our assumption on $\calB$ that $A^+_{\beta,\vec{e}}$ and $A^-_{\beta,\vec{e}}$ are in $\calB$.
\end{proof} %Should we say something like, the other direction is clear?

Note that $\Unbounded$, $\Stationary$ and $\MaxCard$ all are tail sets, but $\Nonempty$ is not. Usually, the assumptions of the following observation will be satisfied, so that sequences witnessing the two cardinal split principles can be chosen to be $\kappa$-lists.

\begin{observation}
\label{obs:ReductionToKappaLists}
Let $\kappa$ be a cardinal, $\tau<\kappa$, and let $\calA$ and $\calB$ be collections of subsets of $\kappa$, where $\calA$ is a tail set and $\calB$ is closed under supersets (i.e., if $B\in\calB$ and $B\sub B'\sub\kappa$, then $B'\in\calB$). Then $\SP{\kappa,\tau}{\calA,\calB}$ is equivalent to the principle stating that there is a $\SP{\kappa,\tau}{\calA,\calB}$-sequence which is a $\kappa$-list.
\end{observation}

\begin{proof}
For the substantial direction, suppose that $\vec{d}$ is a $\SP{\kappa,\tau}{\calA,\calB}$-sequence. Let $e_\alpha=d_\alpha\cap\alpha$, for $\alpha<\kappa$. Note that since $d_\alpha\sub\tau$, we have that $e_\alpha=d_\alpha$ for all $\alpha\in[\tau,\kappa)$. Since $\vec{e}$ is a $\kappa$-list, it remains to show that it witnesses $\SP{\kappa,\tau}{\calA,\calB}$. To this end, let $A\in\calA$ be given. Since $\calA$ is a tail set, then $A'=A\ohne\tau\in\calA$. Since $\vec{d}$ witnesses $\SP{\kappa,\tau}{\calA,\calB}$, we can choose $\beta<\tau$ so that $\tilde{A'}^+_{\beta,\vec{d}}$ and $\tilde{A'}^-_{\beta,\vec{d}}$ are in $\calB$.

But observe that $\tilde{A'}^+_{\beta,\vec{d}}=A'^+_{\beta,\vec{e}}$: $\alpha\in\tilde{A'}^+_{\beta,\vec{d}}$ iff $\alpha\in A'$ and $\beta\in d_\alpha$. Since $A'\cap\tau=\leer$, $\alpha\ge\tau$, so $d_\alpha=e_\alpha$. Thus, this is equivalent to $\alpha\in A'$ and $\beta\in e_\alpha$, which means that $\alpha\in A'^+_{\beta,\vec{e}}$.

Similarly, $\tilde{A'}^-_{\beta,\vec{d}}=A'^-_{\beta,\vec{e}}$:
$\alpha\in\tilde{A'}^-_{\beta,\vec{d}}$ iff $\alpha\in A'$ and $\beta\notin d_\alpha$. As before, $\alpha\ge\tau$, so $d_\alpha=e_\alpha$. Moreover, $\beta<\tau\le\alpha$. Thus, this is equivalent to $\alpha\in A'$ and $\beta\in\alpha\ohne e_\alpha$, which means that $\alpha\in A'^-_{\beta,\vec{e}}$, as wished.

So, we have that $A'^+_{\beta,\vec{e}},A'^-_{\beta,\vec{e}}\in\calB$. Since $A'\sub A$, we have that $A'^+_{\beta,\vec{e}}\sub A^+_{\beta,\vec{e}}$ and $A'^-_{\beta,\vec{e}}\sub A^-_{\beta,\vec{e}}$, so, since $\calB$ is closed under supersets, $A^+_{\beta,\vec{e}},A^-_{\beta,\vec{e}}\in\calB$. Thus, $\vec{e}$ is a $\kappa$-list witnessing that $\SP{\kappa,\tau}{\calA,\calB}$ holds.
\end{proof}

In most of the concrete cases when we will consider $\SP{\kappa,\tau}{\calA,\calB}$ with $\tau\le\kappa$, the assumptions of Observations \ref{observation:IndependentOfInitialSegmentsGivesSplitEquivalentToGeneralSplit} and \ref{obs:ReductionToKappaLists} will be satisfied, so we may think of a $\SP{\kappa,\tau}{\calA,\calB}$-sequence as a $\kappa$-list.

The following lemma says that split principles can be viewed as statements about the sizes of the corresponding splitting numbers.

\begin{lemma}
\label{lemma:SNleLambdaEquivalentToKappaLambdaSplit-General}
Let $\kappa$ and $\tau$ be cardinals, and let $\calA$, $\calB$ be families of subsets of $\kappa$. Then $\SP{\kappa,\tau}{\calA,\calB}$ holds iff $\sn_{\calA,\calB}(\kappa)\le\tau$, that is, iff there is an $(\calA,\calB)$-splitting family of cardinality at most $\tau$.
\end{lemma}

\noindent\emph{Note:} In other words, $\sn_{\calA,\calB}(\kappa)$ is the least $\tau$ such that $\SP{\kappa,\tau}{\calA,\calB}$ holds.

\begin{proof} For the direction from right to left, if $\calS=\{x_\alpha\st\alpha<\tau\}$ is an $(\calA,\calB)$-splitting family for $\kappa$, then we can define a sequence $\kla{d_\alpha\st\alpha<\kappa}$ of subsets of $\tau$ by setting
$d_\alpha=\{\gamma<\tau\st\alpha\in x_\gamma\}.$
Then $\vec{d}$ is a $\SP{\kappa,\tau}{\calA,\calB}$ sequence, because if $A\in\calA$, then there is a $\beta<\tau$ such that both $A\cap x_\beta$ and $A\ohne x_\beta$ belong to $\calB$, but $A\cap x_\beta=\tA^+_{\beta,\vec{d}} \ $ and $A\ohne x_\beta=\tA^-_{\beta,\vec{d}} \ $ so we are done.

Conversely, if $\vec{d}$ is a $\SP{\kappa,\tau}{\calA,\calB}$ sequence, then for each $\gamma<\tau$, we define a subset $x_\gamma$ of $\kappa$ by
$x_\gamma=\{\alpha<\kappa\st\gamma\in d_\alpha\},$ that is, $x_\gamma=\kappa^+_{\gamma,\vec{d}}$.
Then $\calS=\{x_\gamma\st\gamma<\tau\}$ is an $(\calA,\calB)$-splitting family for $\kappa$, because if $A\in\calA$, then there is a $\beta<\tau$ such that both $\tA^+_{\beta,\vec{d}}$ and $\tA^-_{\beta,\vec{d}}$ are in $\calB$, but as before, $\tA^+_{\beta,\vec{d}}=A\cap x_\beta$ and $\tA^-_{\beta, \vec d}=A\ohne x_\beta$.
\end{proof}

What this proof shows is that if $X\sub\kappa\times\lambda$ is a set and we let $X^\beta=\{\alpha<\kappa\st\kla{\alpha,\beta}\in X\}$ be the horizontal section at height $\beta<\lambda$, and $X_\alpha=\{\beta<\lambda\st\kla{\alpha,\beta}\in X\}$ be the vertical section at $\alpha<\kappa$, then $\seq{X_\alpha}{\alpha<\kappa}$ is a \SP{\kappa,\lambda}{\calA,\calB} sequence iff $\set{X^\beta}{\beta<\lambda}$ is an $(\calA,\calB)$-splitting family.

\begin{corollary}
If $\calB$ is a tail set, then $\SP{\kappa}{\calA,\calB}$ holds iff $\sn_{\calA,\calB}(\kappa)\le\kappa$.
\end{corollary}

\begin{observation}
\label{observation:TrivialInequality}
Let $\kappa$ be a cardinal, and let $\calA,\calA',\calB,\calB'$ be collections of subsets of $\kappa$. If $\calA\sub\calA'$ and $\calB'\sub\calB$, then $\sn_{\calA,\calB}(\kappa)\le\sn_{\calA',\calB'}(\kappa)$.
\end{observation}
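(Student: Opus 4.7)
The plan is to observe this is a pure monotonicity statement about the definition of $\sn_{\calA,\calB}(\kappa)$ as the least size of an $(\calA,\calB)$-splitting family, and to show it by witnessing: any $(\calA',\calB')$-splitting family also works as an $(\calA,\calB)$-splitting family under the given inclusions.

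In more detail, I would let $\calF \sub \power{\kappa}$ be an $(\calA',\calB')$-splitting family for $\kappa$ of minimal size $\sn_{\calA',\calB'}(\kappa)$. I claim $\calF$ is also an $(\calA,\calB)$-splitting family. Given $A \in \calA$, the assumption $\calA \sub \calA'$ yields $A \in \calA'$, so by the splitting property of $\calF$ there is $S \in \calF$ with $A \cap S \in \calB'$ and $A \ohne S \in \calB'$. Then $\calB' \sub \calB$ immediately gives $A \cap S \in \calB$ and $A \ohne S \in \calB$, as required.

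From this it follows that $\sn_{\calA,\calB}(\kappa) \le |\calF| = \sn_{\calA',\calB'}(\kappa)$, which is the desired inequality. I don't anticipate any obstacle; the argument is entirely definitional, and the two inclusions $\calA \sub \calA'$ and $\calB' \sub \calB$ are used exactly once each, in the natural places. One could also phrase the proof via Lemma \ref{lemma:SNleLambdaEquivalentToKappaLambdaSplit-General} by noting that any $\SP{\kappa,\tau}{\calA',\calB'}$ sequence is automatically a $\SP{\kappa,\tau}{\calA,\calB}$ sequence, but the direct argument on splitting families is shorter.
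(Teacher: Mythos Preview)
Your argument is correct and is exactly the paper's approach: the paper's proof is the one-line remark that under the stated inclusions every $(\calA',\calB')$-splitting family is also $(\calA,\calB)$-splitting, which is precisely what you spell out.
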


\begin{proof}
Under the assumptions stated, every $(\calA',\calB')$-splitting family is also $(\calA,\calB)$-splitting.
\end{proof}

% !TEX root = Splitk.tex
\section{Characterizations of large cardinals}
\label{sec:LargeCardinals}

We will now provide our characterizations of large cardinal properties using split principles. We take some detours that give more information about splitting numbers. We begin with inaccessible cardinals.

\subsection{Inaccessible and Regular Cardinals}
\label{subsec:inacc}
Towards characterizing inaccessibility by split principles, we first prove two useful, general lemmas.

\begin{lemma}
\label{lem:AbstractPowerLifting}
Let $\kappa$, $\tau$ be cardinals, let $\mathcal{I}$ be a $\tau^+$-complete ideal on $\kappa$ containing all the singletons. If $2^\tau\ge\kappa$, then $\SP{\kappa,\tau}{\mathcal{I}^+}$ holds.
\end{lemma}

\begin{proof}
Let $\calA=\mathcal{I}^+$, and
let $\seq{d_\alpha}{\alpha<\kappa}$ be such that for $\alpha<\beta<\kappa$, $d_\alpha \neq d_\beta$ are distinct subsets of $\tau$. We claim that $\vec{d}$ is a $\SP{\kappa}{\calA}$-sequence. Assume that it is not, and let $A\in\calA$ be such that no $\beta<\tau$ splits $A$ into $\mathcal{A}$ with respect to $\vec{d}$. Then for every such $\beta$, exactly one of $\tilde{A}^+_{\beta,\vec{d}}$ and $\tilde{A}^-_{\beta,\vec{d}}$ is in $\mathcal{I}$: since $A$ can't be split, it can't be that both are in $\mathcal{A}$, but it also can't be that both are in $\mathcal{I}$, or else $A=\tilde{A}^+_{\beta,\vec{d}}\cup\tilde{A}^+_{\beta,\vec{d}}\in\mathcal{I}$. Let $N_\beta$ be the one of the two sets that is in $\mathcal{I}$.
Then $N=\bigcup_{\beta<\tau}N_\beta\in\mathcal{I}$ since $\mathcal{I}$ is $\tau^+$-closed. Clearly, $A$ is not a subset of $N$, for otherwise, $A$ would lie in $I$. So fix any $\alpha\in A\ohne N$.

We claim that for $\beta<\tau$, $\beta\in d_\alpha$ iff $\tilde{A}^+_{\beta,\vec{d}}$ is in $\calA$. For the direction from left to right, if $\beta\in d_\alpha$, then $\alpha\in\tilde{A}^+_{\beta,\vec{d}}$. So if
$\tilde{A}^+_{\beta,\vec{d}}$ were not in $\calA$, then we'd have that $N_\beta=\tilde{A}^+{\beta,\vec{d}}\in\mathcal{I}$, so $\alpha\in N_\beta$, which is a contradiction, as $\alpha\in A\ohne N$. Vice versa, if $\tilde{A}^+_{\beta,\vec{d}}\in\calA$, then $N_\beta=\tilde{A}^-_{\beta,\vec{d}}$. So since $\alpha\in A\ohne N$, $\alpha\notin\tilde{A}^-_{\beta,\vec{d}}$, so $\alpha\in\tilde{A}^+_{\beta,\vec{d}}$, and this implies that $\beta\in d_\alpha$.

So we have that
\[d_\alpha=\{\beta<\tau\st\tilde{A}^+_{\beta,\vec{d}}\in\calA\}.\]
But note that that the right hand side of this equality is independent of $\alpha$. Thus, for $\alpha_0,\alpha_1\in A\ohne N$ with $\alpha_0\neq\alpha_1$ (such $\alpha_0$, $\alpha_1$ exist by our assumption that $\mathcal{I}$ contains all singletons), we have that $d_{\alpha_0}=d_{\alpha_1}$, a contradiction.
\end{proof}

\begin{definition}
\label{def:RangeWidthSpread}
Let $\vec{d}$ be a $\SP{\kappa,\tau}{\calA,\calB}$-sequence $\vec{d}$. We write $\ran(\vec{d})=\{d_\alpha\st\alpha<\kappa\}$, and define the \emph{width} of $\vec{d}$ to be $\card{\ran(\vec{d})}$. For $d\in\ran(\vec{d})$, let us write $\spread(d)=\{\alpha<\kappa\st d_\alpha=d\}$.
\end{definition}

Note that $\spread(d)$ depends on the sequence $\vec{d}$, but it will usually be clear from the context which sequence is meant.

\begin{observation}
\label{obs:SpreadsArentSplit}
Let $\vec{d}$ be a $\SP{\kappa,\tau}{\calA,\calB}$-sequence, where $\leer\notin\calB$. Then for all $d\in\ran(\vec{d})$, $\spread(d)\notin\calA$.
\end{observation}

\begin{proof}
Otherwise, $\spread(d)$ would have to be split into nonempty sets, but $\vec{d}$ is constant on $\spread(d)$, so this is impossible.
\end{proof}

\begin{observation}
\label{obs:Width>=Completeness}
Let $\calI$ be a $\bkappa$-complete proper ideal on the cardinal $\kappa$, and let $\vec{d}$ be a $\SP{\kappa,\tau}{\calI^+,\calB}$-sequence, where
$\leer\notin\calB$. Then the width of $\vec{d}$ must be at least $\bkappa$.
\end{observation}

\begin{proof}
Otherwise $\{\spread(d)\st d\in\ran(\vec{d})\}$ is a partition of $\kappa$ into fewer than $\bkappa$ pieces, and by Observation \ref{obs:SpreadsArentSplit}, each piece is in $\calI$. This contradicts that $\calI$ is $\bkappa$-complete and proper.
\end{proof}

We can now quickly derive a kind of converse to Lemma \ref{lem:AbstractPowerLifting}.

\begin{lemma}
\label{lem:AbstractPowerConverse}
Let $\tau$, $\kappa$ be a cardinals, and let $\mathcal{I}$ be a $(2^\tau)^+$-complete proper ideal on $\kappa$. Then $\SP{\kappa,\tau}{\mathcal{I}^+,\Nonempty}$ fails.
\end{lemma}

\begin{proof}
Suppose $\vec{d}=\seq{d_\alpha}{\alpha<\kappa}$ were a $\SP{\kappa,\tau}{\calA,\Nonempty}$-sequence. Being a sequence of subsets of $\tau$, its width can be at most $2^\tau$. But $\calI$ is $(2^\tau)^+$-complete, so by Observation \ref{obs:Width>=Completeness}, the width of $\vec{d}$ must be at least $(2^\tau)^+$.
\end{proof}

%Let $\calA=\mathcal{I}^+$, and
%suppose $\vec{d}=\seq{d_\alpha}{\alpha<\kappa}$ were a $\SP{\kappa,\tau}{\calA,\Nonempty}$-sequence. Let $R=\{d_\alpha\st\alpha<\kappa\}$ and set, for $r\in R$  \[S_r=\{\alpha<\kappa\st d_\alpha=r\}.\]
%Since $\mathcal{I}$ is a proper ideal and
%\[\kappa=\bigcup_{r\in R}S_r,\]
%there is an $r_0\in R$ such that $S_{r_0}\notin\mathcal{I}$, or else, $\kappa\in\mathcal{I}$, as $\card{R}\le 2^\tau$ and $\mathcal{I}$ is $(2^\tau)^+$-complete. Let $S=S_{r_0}$. So $S\in\calA$, but for any $\beta<\tau$, either $\tilde{S}^+_{\beta,\vec{d}}=S$ (if $\beta\in r_0$) or $\tilde{S}^+_{\beta,\vec{d}}=\leer$ (if $\beta\notin r_0$). So $S$ is not split into nonempty sets by any $\beta<\tau$ with respect to $\vec{d}$, a contradiction.
%\end{proof}

\begin{theorem}
\label{thm:AbstractCharacterizationOfInaccessibility}
Let $\kappa$ be an uncountable regular cardinal, and let $\mathcal{I}$ be a $\kappa$-complete proper ideal on $\kappa$ containing all singletons. Then the following are equivalent:
\begin{enumerate}[label={\textnormal{(\arabic*)}}]
  \item
  \label{item:KappaIsInaccessible}
  $\kappa$ is inaccessible.
  \item
  \label{item:NotSPkappatau(I^+,nonempty)}
  For every cardinal $\tau<\kappa$, $\SP{\kappa,\tau}{\mathcal{I}^+,\Nonempty}$ fails.
  \item
  \label{item:NotSPkappatau(I^+)}
  For every cardinal $\tau<\kappa$, $\SP{\kappa,\tau}{\mathcal{I}^+}$ fails.
\end{enumerate}
\end{theorem}

\begin{proof}
\ref{item:KappaIsInaccessible}$\implies$\ref{item:NotSPkappatau(I^+,nonempty)}: Assume $\kappa$ is inaccessible, and let $\tau<\kappa$ be a cardinal. Then $2^\tau<\kappa$, so $\mathcal{I}$ is $(2^\tau)^+$-complete. So Lemma \ref{lem:AbstractPowerConverse} applies, showing that $\SP{\kappa,\tau}{\mathcal{I}^+,\Nonempty}$ fails.

\ref{item:NotSPkappatau(I^+,nonempty)}$\implies$\ref{item:NotSPkappatau(I^+)}: This is trivial.

\ref{item:NotSPkappatau(I^+)}$\implies$\ref{item:KappaIsInaccessible}: Let us show the contrapositive. So we assume that $\kappa$ is not inaccessible. Since $\kappa$ is regular, there is a $\tau<\kappa$ such that $2^\tau\ge\kappa$. The ideal $\mathcal{I}$ is then $\tau^+$-complete, so that Lemma \ref{lem:AbstractPowerLifting} applies, showing that $\SP{\kappa,\tau}{\mathcal{I}^+}$ holds.
\end{proof}

The following lemma characterizes inaccessibility by concrete instances of the previous theorem. The equivalence \ref{item:Inaccessible}$\iff$\ref{item:Unbounded} follows from work of Motoyoshi \cite{Motoyoshi:Masters}.

\begin{lemma}
\label{lemma:CharacterizationOfInaccessibility}
Let $\kappa$ be an uncountable regular cardinal. The following are equivalent:
\begin{enumerate}[label=\textnormal{(\arabic*)}]
\item
\label{item:Inaccessible}
$\kappa$ is inaccessible.
\item
\label{item:StationaryNonempty}
$\SP{\kappa,\tau}{\Stationary,\Nonempty}$ fails for every $\tau<\kappa$.
\\
Equivalently, $\sn_{\Stationary,\Nonempty}(\kappa)\ge\kappa$.
\item
\label{item:Stationary}
$\SP{\kappa,\tau}{\Stationary}$ fails for every $\tau<\kappa$.
\\
Equivalently, $\sn_{\Stationary}(\kappa)\ge\kappa$.
%\item
%\label{item:StationaryUnbounded}
%$\SP{\kappa,\tau}{\Stationary,\Unbounded}$ fails for every $\tau<\kappa$. Equivalently, $\sn_{\Stationary,\Unbounded}(\kappa)\ge\kappa$.
\item
\label{item:UnboundedNonempty}
$\SP{\kappa,\tau}{\Unbounded,\Nonempty}$ fails for every $\tau<\kappa$.
\\
Equivalently, $\sn_{\Unbounded,\Nonempty}(\kappa)\ge\kappa$.
\item
\label{item:Unbounded}
$\SP{\kappa,\tau}{\Unbounded}$ fails for every $\tau<\kappa$.
\\
Equivalently, $\sn_{\Unbounded}(\kappa)\ge\kappa$.
\end{enumerate}
It follows that for any collection $\calB$ with $\Stationary\sub\calB\sub\Nonempty$, these conditions are equivalent to the failure of $\SP{\kappa,\tau}{\Stationary,\calB}$, and similarly for any $\calB$ with $\Unbounded\sub\calB\sub\Nonempty$, they are equivalent to the failure of $\SP{\kappa,\tau}{\Unbounded,\calB}$.

Moreover, each of \ref{item:StationaryNonempty}, \ref{item:UnboundedNonempty} are equivalent to $\kappa$ being inaccessible even if $\kappa$ is not assumed to be regular.
\end{lemma}

\begin{proof} The equivalence \ref{item:Inaccessible}$\iff$\ref{item:Unbounded} derives from previously known results as follows. According to \cite{Suzuki:OnSplittingNumbers}, it was shown in \cite{Motoyoshi:Masters} that for an uncountable regular cardinal $\kappa$, $\kappa$ is inaccessible iff $\sn(\kappa)\ge\kappa$ (see \cite[Lemma 3]{Zapletal:SplittingNumberAtUncountableCards} for a proof). By Lemma \ref{lemma:SNleLambdaEquivalentToKappaLambdaSplit-General}, this is equivalent to saying that for no $\tau<\kappa$ does $\SP{\kappa,\tau}{\Unbounded}$ hold.

But of course, all of the claimed equivalences are just instances of the previous theorem, using nonstationary ideal on $\kappa$ and the ideal of the bounded subsets of $\kappa$, both of which are $\kappa$-complete, since $\kappa$ is regular.

The claim about families $\calB$ with $\Stationary\sub\calB\sub\Nonempty$ follows from the equivalence of \ref{item:StationaryNonempty}~and \ref{item:Stationary}, and the claim about $\calB$ with $\Unbounded\sub\calB\sub\Nonempty$ follows from the equivalence of \ref{item:UnboundedNonempty} and \ref{item:Unbounded}.

For the last claim, it suffices to show that \ref{item:UnboundedNonempty} implies that $\kappa$ is regular. But this is obvious, since if $\seq{\xi_\alpha}{\alpha<\cf(\kappa)}$ is cofinal in $\kappa$, then $\{\xi_\alpha\st\alpha<\cf(\kappa)\}$ (viewed as a collection of subsets of $\kappa$) is an $(\Unbounded,\Nonempty)$-splitting family. So $\sn_{\Unbounded,\Nonempty}(\kappa)\le\cf(\kappa)$, so $\SP{\kappa,\cf(\kappa)}{\Unbounded,\Nonempty}$ holds. So it has to be the case that $\cf(\kappa)=\kappa$.
\end{proof}

The two lemmas from the beginning of this section give us some more information even if $\kappa$ is not inaccessible.

\begin{corollary}
\label{cor:BothWaysForRegulars}
Let $\tau<\kappa$ be cardinals, where $\kappa$ is regular. Let $\mathcal{I}\sub\power{\kappa}$ be a $\kappa$-complete proper ideal containing all the singletons. Then, letting $\calA=\mathcal{I}^+$,
$\SP{\kappa,\tau}{\calA}$ holds iff $2^\tau\ge\kappa$.
\end{corollary}

\noindent\emph{Note:} This means that in the setting of the corollary, $\mathfrak{s}_\calA(\kappa)<\kappa$ iff there is a $\tau<\kappa$ with $2^\tau\ge\kappa$, and in this situation, $\mathfrak{s}_\calA(\kappa)$ is the least $\tau<\kappa$ such that $2^\tau\ge\kappa$. The corollary applies when $\calA$ is $\Unbounded_\kappa$, $[\kappa]^\kappa$, or $\Stationary_\kappa$ (if $\cf(\kappa)>\omega$), for example.

\begin{proof}
If $2^\tau\ge\kappa$, then Lemma \ref{lem:AbstractPowerLifting} applies, since $I$ is $\tau^+$-complete, giving us that $\SP{\kappa,\tau}{\calA}$ holds.

On the other hand, if $2^\tau<\kappa$, then Lemma \ref{lem:AbstractPowerConverse} applies, because $I$ is $(2^\tau)^+$-complete. So $\SP{\kappa,\tau}{\calA,\Nonempty}$ fails, hence $\SP{\kappa,\tau}{\calA}$ fails as well, as $\calA\sub\Nonempty$.
\end{proof}

Since it fits into this context, let us point out that the regularity of a limit ordinal $\kappa$ can be expressed using a thin version of the two cardinal split principle.

\begin{definition}
\label{def:TwoCardinalSplit}
For cardinals $\kappa$ and $\tau$, and sets $\calA,\calB\sub\power{\kappa}$, a \emph{thin} $\SP{\kappa,\tau}{\calA,\calB}$-sequence is a $\SP{\kappa,\tau}{\calA,\calB}$-sequence with the property that for each $\beta<\tau$, the set $\{d_\alpha\cap\beta\st\alpha<\kappa\}$ has cardinality less than $\kappa$. The principle thin $\SP{\kappa,\tau}{\calA,\calB}$ says that there is a thin $\SP{\kappa,\tau}{\calA,\calB}$-sequence.
\end{definition}

If $\tau<\kappa$, $\calA$ is a tail set and $\calB$ is closed under supersets, the principle \emph{thin} $\SP{\kappa,\tau}{\calA,\calB}$ can be witnessed by a thin $\kappa$-list, by Observation \ref{obs:ReductionToKappaLists}, in which case we are in line with Definition \ref{definition:ThinSplit}.

\begin{observation}
\label{obs:CharacterizationOfRegularity}
Let $\kappa$ be a limit ordinal. The following are equivalent:
\begin{enumerate}[label=\textnormal{(\arabic*)}]
\item
\label{item:Regular}
$\kappa$ is regular.
%\item
%\label{item:ThinStationaryNonempty}
%Thin $\SP{\kappa,\tau}{\Stationary,\Nonempty}$ fails for every %$\tau<\kappa$.
\item
\label{item:ThinUnboundedNonempty}
Thin $\SP{\kappa,\tau}{\Unbounded,\Nonempty}$ fails for every $\tau<\kappa$.
\end{enumerate}
\end{observation}

\begin{proof}
\ref{item:Regular}$\implies$\ref{item:ThinUnboundedNonempty}: Let $\kappa$ be regular, and suppose thin $\SP{\kappa,\tau}{\Unbounded,\Nonempty}$ held, for some $\tau<\kappa$. Let $\vec{d}$ be a thin $\kappa$-list witnessing this. Note that the set $\{d_\alpha\st\alpha<\kappa\}=\{d_\alpha\cap\tau\st\alpha<\kappa\}$ has cardinality less than $\kappa$. Hence, there is a stationary set $S\sub\kappa$ (with $S\cap\tau=\leer$) on which $\vec{d}$ is constant. Clearly then, for every $\beta<\tau$, one of $S^+_\beta$ and $S^-_\beta$ is empty, a contradiction.

%\ref{item:ThinStationaryNonempty}$\implies$\ref{item:ThinUnboundedNonempty}: trivial.

\ref{item:ThinUnboundedNonempty}$\implies$\ref{item:Regular}: Assume thin $\SP{\kappa,\tau}{\Unbounded,\Nonempty}$ fails for every $\tau<\kappa$, but suppose $\kappa$ is singular. Let $\tau=\cf(\kappa)<\kappa$. Let $\seq{\xi_\gamma}{\gamma<\tau}$ be monotone and cofinal in $\kappa$. As was pointed out in the last part of the proof of Lemma \ref{lemma:CharacterizationOfInaccessibility}, $\{\xi_\gamma\st\gamma<\tau\}$ is an $(\Unbounded,\Nonempty)$-splitting family. So, as before, if we define,
for $\alpha<\kappa$:
\[d_\alpha=\{\gamma<\tau\st\alpha<\xi_\gamma\}\]
then $\vec{d}$ is an $\SP{\kappa,\tau}{\calA,\calB}$ sequence. But note that $d_\alpha$ is of the form $\tau\ohne\delta_\alpha$, where $\delta_\alpha<\tau$ is the least $\gamma$ such that $\alpha<\xi_\gamma$. So $\{d_\alpha\st\alpha<\kappa\}$ has cardinality at most $\tau$. Moreover, if we define $e_\alpha=d_\alpha\cap\alpha$, then $\vec{e}$ is a $\kappa$-list that also witnesses $\SP{\kappa,\tau}{\calA,\calB}$, by Observation \ref{obs:ReductionToKappaLists}, and $\vec{e}$ is thin. This contradicts our assumption.
\end{proof}

% !TEX root = Splitk.tex
\subsection{Weakly compact cardinals}
\label{subsec:wc}

We show that the original split principle can be used to characterize weakly compact cardinals.

\begin{corollary}
\label{corollary:SplitKappaIffNotWC}
Let $\kappa$ be a regular uncountable cardinal. The following are equivalent:
\begin{enumerate}[label=\textnormal{(\arabic*)}]
	\item \label{item:wc} $\kappa$ is weakly compact
	\item \label{item:OGSplit} $\SP{\kappa}{\Unbounded}$ fails.
\end{enumerate}	
\end{corollary}

\begin{proof}
We shall show both directions of the equivalence separately.

\ref{item:wc}$\implies$\ref{item:OGSplit}: We prove the contrapositive, so assuming $\SP{\kappa}{\Unbounded}$, we have to show that $\kappa$ is not weakly compact. If $\kappa$ is not inaccessible, then we are done, so let's assume it is. By Corollary \ref{corollary:TreeCharacterizationIfKappaIsRegular}, there is a sequential tree $T\sub{}^{{<}\kappa}2$ of height $\kappa$ with no cofinal branch. Since $\kappa$ is inaccessible, $T$ is a $\kappa$-tree, and thus, $\kappa$ does not have the tree property, so $\kappa$ is not weakly compact.

\ref{item:OGSplit}$\implies$\ref{item:wc}: Let $\kappa$ fail to be weakly compact. We split into two cases.

\emph{Case 1: $\kappa$ is not inaccessible.} Then by Lemma \ref{lemma:CharacterizationOfInaccessibility}, there is a $\tau<\kappa$ such that $\SP{\kappa,\tau}{\Unbounded}$ holds. This clearly implies that $\SP{\kappa,\kappa}{\Unbounded}$ holds. Moreover, since $\Unbounded$ is a tail set, by Observation \ref{observation:IndependentOfInitialSegmentsGivesSplitEquivalentToGeneralSplit}, $\SP{\kappa,\kappa}{\Unbounded}$ is equivalent to $\SP{\kappa}{\Unbounded}$.

\emph{Case 2: $\kappa$ is inaccessible.} Since $\kappa$ is not weakly compact, then the tree property fails at $\kappa$, and this is witnessed by a sequential tree $T$ on ${}^{{<}\kappa}2$ that has no cofinal branch.
Thus, $\Split{\kappa}$ holds, by Corollary \ref{corollary:TreeCharacterizationIfKappaIsRegular}.
\end{proof}

Note that Corollary \ref{corollary:SplitKappaIffNotWC} would hold for $\kappa=\omega$ as well, if we considered $\omega$ to be weakly compact, which is not the standard usage.

So an uncountable regular cardinal $\kappa$ is weakly compact iff $\SP{\kappa}{\Unbounded}$ fails. Since this is equivalent to saying that $\SP{\kappa,\kappa}{\Unbounded}$ fails, it can be equivalently expressed by saying that $\sn(\kappa)>\kappa$, by Lemma \ref{lemma:SNleLambdaEquivalentToKappaLambdaSplit-General}. This latter characterization of weak compactness was shown in \cite{Suzuki:OnSplittingNumbers}.

%It should be noted, however, before moving on to larger large cardinals, that what we really showed with our proof of Theorem \ref{theorem:ListSplitsIffItHasNoBranch} is that $\Split{\kappa}$ is equivalent to the failure of something seemingly stronger than every $\kappa$-list having a cofinal branch, namely the failure of the \textit{strong} branch property, which we define below.
%
%\begin{definition}
%\label{definition:StrongBranch}
%Let $\vec d = \seq{d_\alpha}{\alpha < \kappa}$ be a $\kappa$-list. A cofinal branch $b\subseteq\kappa$ is a \emph{strong branch} of $\vec{d}$ so long as there is an unbounded $U \subseteq \kappa$ such that for each $\gamma < \kappa$, there is $\alpha > \gamma$ such that for all $\delta \in U$ with $\delta>\alpha$ we have that
%$d_\delta \cap \gamma = b \cap \gamma.$ In this case we say that the unbounded set $U$ \textit{guides} the cofinal branch $b$.
%
%If every $\kappa$-list has a strong branch, we say $\SBP(\kappa)$ holds.
%\end{definition}
%
%Indeed the argument in the beginning of the proof of Theorem \ref{theorem:ListSplitsIffItHasNoBranch} shows that if a $\kappa$-list has a cofinal branch, then that branch is a strong branch (and it is not necessary to assume that $\kappa$ is regular here). The unbounded set verifying strongness is the range of the function $f$ in that proof. Trivially, every strong branch is a branch, and so, these concepts are equivalent for $\kappa$-lists. The situation will turn out to be less clear when dealing with $\Pkl$-lists, as we will do in Section \ref{sec:SplittingSubsetsOfPkl}.

% !TEX root = Splitk.tex
\subsection{Ineffable cardinals and coherence}
\label{subsec:Ineffables}

Ineffable cardinals were introduced by Jensen and Kunen \cite{Jensen:CombinatorialProperties} and have become a staple of the theory of large cardinals. A regular cardinal $\kappa$ is ineffable if whenever $\seq{d_\alpha}{\alpha<\kappa}$ is a $\kappa$-list, there is a stationary set $S\sub\kappa$ on which $\vec{d}$ coheres, meaning that for all $\alpha<\beta$, both in $S$, $d_\alpha=d_\beta\cap\alpha$. Since $\kappa$-lists figure so prominently in this large cardinal property, it is natural to expect a tight connection to split principles.

\begin{definition}
\label{def:IneffableBranchesEtc}
Let $\kappa$ be a cardinal, and let $\calA$ be a family of subsets of $\kappa$. A subset $b\sub\kappa$ is an $\calA$ branch of the $\kappa$-list $\vec d = \seq{ d_\alpha }{\alpha< \kappa}$ iff there is a set $A\in\calA$ such that for all $\alpha\in A$, we have that
$d_\alpha = b\cap\alpha$. A $\Stationary$ branch is called an \emph{ineffable} branch, and an $\Unbounded$ branch is called an \emph{almost ineffable} branch. $\kappa$ is $\calA$-ineffable if $\kappa$ is regular and every $\kappa$-list has an $\calA$ branch.
\end{definition}

Note that $\vec{d}$ has an $\calA$ branch iff there is an $A\in\calA$ on which $\vec{d}$ coheres (in which case $\bigcup_{\alpha\in A}d_\alpha$ is an $\calA$ branch). So using this language, regular, \Stationary-ineffable cardinals are exactly \textit{ineffable cardinals}, and regular, \Unbounded-ineffable ones are exactly \textit{almost ineffable cardinals}; see \cite{Jensen:CombinatorialProperties}, where these were introduced.

%In particular, ineffable cardinals are weakly compact.
%Indeed $\IBP(\kappa) \implies \SBP(\kappa) \implies \BP(\kappa)$.
We start by observing that splitting into nonempty sets is equivalent to the failure of coherence.

\begin{lemma}
\label{lemma:Coherence=NotSplittingIntoNonempty}
Let $\kappa$ be a cardinal, $A\sub\kappa$, and let $\vec{d}$ be a $\kappa$-list. Then the following are equivalent:
\begin{enumerate}[label=\textnormal{(\arabic*)}]
  \item $\vec{d}$ splits $A$ into nonempty sets (i.e., there is a $\beta$ such that both $A^+_\beta$ and $A^-_\beta$ are nonempty).
  \item $\vec{d}$ does not cohere on $A$.
\end{enumerate}
\end{lemma}

\begin{proof}
(1)$\implies$(2): Let $\beta$ be as in (1). Let $\gamma\in A^+_\beta$ and $\delta\in A^-_\beta$. Then $\beta\in d_\gamma$ and $\beta\notin d_\delta$, but $\beta<\delta$. Hence, $\vec{d}$ does not cohere on $\{\gamma,\delta\}$.

(2)$\implies$(1): We show the contrapositive. So assuming $\vec{d}$ does not split $A$ into nonempty sets, we show that $\vec{d}$ coheres on $A$. To see this, let $\gamma<\delta$ be given, both in $A$. Then for every $\beta<\gamma$, $\beta\in d_\gamma$ iff $\beta\in d_\delta$, for if not, say if $\beta\in d_\gamma$ but $\beta\notin d_\delta$, then $\gamma\in A^+_{\beta,\vec{d}}$ and $\delta\in A^-_{\beta,\vec{d}}$ (and if $\beta\notin d_\gamma$ but $\beta\in d_\delta$, then it's the other way around). In any case, we'd have a $\beta<\gamma$ such that both $A^+_{\beta,\vec{d}}$ and $A^-_{\beta,\vec{d}}$ are nonempty, a contradiction. Thus, $d_\gamma=d_\delta\cap\gamma$, as wished.
\end{proof}

This results in a general and uniform characterization of $\calA$-ineffability in terms of split principles.

\begin{theorem}
\label{theorem:GeneralABranchesForKappaListsAndGeneralIneffability}
Let $\kappa$ be a cardinal, $\calA$ a family of subsets of $\kappa$ and $\vec{d}$ a $\kappa$-list. Then the following are equivalent:
\begin{enumerate}[label=\textnormal{(\arabic*)}]
  \item $\vec{d}$ is a \SP{\kappa}{\calA,\Nonempty}-sequence.
  \item $\vec{d}$ has no $\calA$ branch.
\end{enumerate}
Thus, a regular, uncountable cardinal $\kappa$ is $\calA$-ineffable if and only if $\SP{\kappa}{\calA,\Nonempty}$ fails.
\end{theorem}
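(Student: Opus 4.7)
The plan is to prove the contrapositive: $\vec d$ has an $\calA$-branch if and only if there is some $A \in \calA$ such that no $\beta < \kappa$ splits $A$ into nonempty sets (i.e., for every $\beta < \kappa$, at least one of $A^+_\beta, A^-_\beta$ is empty). Both directions are essentially definition chases, the only creative step being the correct formula for the branch $b$ in the converse.

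For the direction (2)$\implies$(1), suppose $b \subseteq \kappa$ is an $\calA$-branch with witness $A \in \calA$, so $d_\alpha = b \cap \alpha$ for all $\alpha \in A$. Fix $\beta < \kappa$. If $\alpha \in A^+_\beta \cup A^-_\beta$ then $\beta < \alpha$ (since $d_\alpha \subseteq \alpha$), so $d_\alpha \cap \{\beta\} = (b \cap \alpha) \cap \{\beta\} = b \cap \{\beta\}$. Thus if $\beta \in b$, then $\beta \in d_\alpha$ for every $\alpha \in A$ with $\alpha > \beta$, which forces $A^-_\beta = \emptyset$; symmetrically if $\beta \notin b$, then $A^+_\beta = \emptyset$. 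In either case $\beta$ fails to split $A$ into nonempty sets, so $\vec d$ is not a \SP{\kappa}{\calA,\Nonempty}-sequence.

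For (1)$\implies$(2), suppose $\vec d$ is not a \SP{\kappa}{\calA,\Nonempty}-sequence and pick a witness $A \in \calA$, so for every $\beta < \kappa$ at least one of $A^+_\beta, A^-_\beta$ is empty. Define
\[ b = \{\, \beta < \kappa \st A^+_\beta \ne \emptyset \,\}. \]
I claim $d_\alpha = b \cap \alpha$ for every $\alpha \in A$, which makes $b$ an $\calA$-branch witnessed by $A$. For $\alpha \in A$ and $\beta \in d_\alpha$, one has $\beta < \alpha$ and $\alpha \in A^+_\beta$, so $\beta \in b \cap \alpha$. Conversely, if $\beta \in b \cap \alpha$ but $\beta \notin d_\alpha$, then $\beta \in \alpha \setminus d_\alpha$ would give $\alpha \in A^-_\beta$, so $A^-_\beta \ne \emptyset$; combined with $A^+_\beta \ne \emptyset$ (from $\beta \in b$), this contradicts the choice of $A$.

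Having established the equivalence, the characterizations of $\calA$-ineffability fall out immediately: a regular uncountable $\kappa$ is $\calA$-ineffable precisely when every $\kappa$-list has an $\calA$-branch, which by the equivalence is the same as saying no $\kappa$-list is a \SP{\kappa}{\calA,\Nonempty}-sequence, i.e., \SP{\kappa}{\calA,\Nonempty} fails. Specializing $\calA$ to \Stationary{} and \Unbounded{} gives the ineffability and almost ineffability statements. I do not expect any genuine obstacle here; the one thing worth flagging is that ordinals $\alpha \in A$ with $\alpha \le \beta$ contribute nothing to $A^\pm_\beta$, so the clean dichotomy ``exactly one of $A^+_\beta, A^-_\beta$ is nonempty'' need not hold, but ``at most one is nonempty'' suffices and is exactly what the definition of splitting into \Nonempty{} negates.
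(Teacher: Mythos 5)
Your proof is correct and follows essentially the same route as the paper: the same contradiction argument showing a branch kills splitting into nonempty sets, and the same definition $b=\{\beta<\kappa\st A^+_\beta\neq\emptyset\}$ of the branch in the converse. The closing remark about the asymmetry (at most one of $A^\pm_\beta$ nonempty, rather than exactly one) is a fair observation but does not affect the argument, which matches the paper's.
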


\begin{proof}
(1)$\implies$(2): Suppose $B$ is an $\calA$ branch for $\vec{d}$. Let $A\in\calA$ be such that for all $\alpha\in A$, $d_\alpha=B\cap\alpha$. Then $\vec{d}$ coheres on $A$. So by Lemma \ref{lemma:Coherence=NotSplittingIntoNonempty}, $\vec{d}$ does not split $A$ into nonempty sets, a contradiction.

(2)$\implies$(1): If $\vec{d}$ has no $\calA$ branch, then $\vec{d}$ coheres on no $A\in\calA$, so by Lemma \ref{lemma:Coherence=NotSplittingIntoNonempty}, $\vec{d}$ splits every $A\in\calA$ into nonempty sets.
\end{proof}

A comment about this theorem is in order. The attentive reader will have noticed that in its proof, we used the particular way $A^-_\beta$ was defined: $\{\alpha\in A\st\beta\in\alpha\ohne d_\alpha\}$ instead of $\{\alpha\in A\st\beta\notin d_\alpha\}$, which is how $\tilde{A}^-_\beta$ was defined. In the context of $\kappa$-lists, it makes intuitive sense to define $A^-_\beta$ the way we did, because after all $d_\alpha$ can only give positive information up to $\alpha$ (since $d_\alpha\sub\alpha$), so why should it be allowed to give unbounded negative information? Namely, let us consider the very simple $\kappa$-list $d_\alpha=\alpha$ for $\alpha<\kappa$. We claim that it is a $\SP{\kappa,\kappa}{[\kappa]^{{\ge}2},\Nonempty}$-sequence (this is the strongest possible split principle splitting into $\Nonempty$); recall that the two cardinal split principle was formulated with $\tilde{A}^+_\beta$ and $\tilde{A}^-_\beta$, so the ``asymmetric'' version. In other words, we are claiming that if we hadn't used $A^-_\beta$ when defining $\SP{\kappa}{\calA,\Nonempty}$, the principle would trivialize. To see this, let $A\sub\kappa$ be arbitrary, and fix $\beta<\kappa$. Then \[\tilde{A}^+_\beta=A^+_\beta=\{\alpha\in A\st\beta\in d_\alpha\}=\{\alpha\in A\st\beta\in\alpha\}=A\ohne(\beta+1),\]
and
\[\tilde{A}^-_\beta=\{\alpha\in A\st\beta\notin d_\alpha\}=\{\alpha\in A\st\beta\notin\alpha\}=A\cap(\beta+1).\]
Thus, if $A$ has at least two elements, we can let $\beta$ be its minimum and both $\tilde{A}^+_\beta$ and $\tilde{A}^-_\beta$ turn out nonempty, proving the principle. If we had worked with the symmetric version, we would have:
\[ A^-_\beta=\{\alpha\in A\st\beta\in\alpha\ohne d_\alpha\}=\{\alpha\in A\st\beta\in\alpha\ohne\alpha\}=\leer,\]
so this particular sequence $\vec{d}$ does not split any set into two nonempty sets in the sense of $\SP{\kappa}{\calA,\Nonempty}$. In fact, we have shown that if $\kappa$ is regular then $\SP{\kappa}{\calA,\Nonempty}$ only holds if $\kappa$ is $\calA$-ineffable.

Observation \ref{observation:IndependentOfInitialSegmentsGivesSplitEquivalentToGeneralSplit} does not apply here, because $\calB=\Nonempty$ is not a tail set. Thus, even though a special case of Theorem \ref{theorem:GeneralABranchesForKappaListsAndGeneralIneffability} shows that an uncountable regular $\kappa$ is ineffable if and only if $\SP{\kappa}{\Stationary,\Nonempty}$ fails, this does not provide us with a characterization of ineffability in terms of splitting numbers and splitting families. This is why the following improvement of Lemma \ref{lemma:Coherence=NotSplittingIntoNonempty} for the context of splitting stationary sets is useful.

\begin{lemma}
\label{lem:CoherencyOnStationarySet=NotSplittingStationaryIntoStationary-Abstract}
\label{lem:CoherencyOnStationarySet=NotSplittingStationaryIntoStationary}
Let $\kappa$ be an uncountable regular cardinal, and let $\mathcal{I}$ be a normal ideal on $\kappa$ containing all the singletons. Then the following are equivalent for a $\kappa$-list $\vec{d}$:
\begin{enumerate}[label={\textnormal{(\arabic*)}}]
  \item
  \label{item:ListCoheresOnI^+Set}
  $\vec{d}$ coheres on a set in $\mathcal{I}^+$.
  \item
  \label{item:ListDoesNotSplitI^+}
  $\vec{d}$ is not a $\SP{\kappa}{\mathcal{I}^+}$-sequence.
\end{enumerate}
In particular, this is true when $\mathcal{I}$ is the nonstationary ideal on $\kappa$, so that $\vec{d}$ coheres on a stationary subset of $\kappa$ iff $\vec{d}$ is not a $\SP{\kappa}{\Stationary}$-sequence.
\end{lemma}

\begin{proof}
\ref{item:ListCoheresOnI^+Set}$\implies$\ref{item:ListDoesNotSplitI^+}: If $\vec{d}$ coheres on a set $S\in\mathcal{I}^+$, say, then $\vec{d}$ does not split $S$ into nonempty sets, by Lemma \ref{lemma:Coherence=NotSplittingIntoNonempty}, and in particular, it does not split $S$ into sets from $\mathcal{I}^+$, hence \ref{item:ListDoesNotSplitI^+} holds.

\ref{item:ListDoesNotSplitI^+}$\implies$\ref{item:ListCoheresOnI^+Set}:
Since $\vec{d}$ is assumed not to be a $\SP{\kappa}{\mathcal{I}^+}$-sequence, it follows that there is set $S\in\mathcal{I}^+$ such that for no $\beta<\kappa$ do we have that both $S^+_\beta=S^+_{\beta,\vec{d}}$ and $S^-_\beta=S^-_{\beta,\vec{d}}$ are in $\mathcal{I}^+$. Since $S=(S\cap(\beta+1))\cup S^+_\beta\cup S^-_\beta$ and $\beta+1\in\mathcal{I}$, it follows that exactly one of $S^+_\beta$ and $S^-_\beta$ is in $\mathcal{I}^+$ and the other one is not. For each $\beta< \kappa$ let $N_\beta$ be the one that is in $\mathcal{I}$. Let $N=\dunion\limits_{\beta<\kappa}N_\beta$. So $N\in\mathcal{I}$ by normality, and hence, $T=S\ohne N\in\mathcal{I}^+$.
Moreover, $\vec{d}$ coheres on $T$, because if $\gamma<\delta$ both are members of $T$, then for $\beta<\gamma$, it follows that $\gamma,\delta\notin N_\beta$. So if $N_\beta=S^-_\beta\in\mathcal{I}$, then $\gamma,\delta\in S^+_\beta$, which means that $\beta\in d_\gamma$ and $\beta\in d_\delta$. And if $N_\beta=S^+_\beta\in\mathcal{I}$, then $\gamma,\delta\in S^-_\beta$ and it follows that $\beta\notin d_\gamma$ and $\beta\notin d_\delta$. So $d_\gamma=d_\delta\cap\gamma$.
\end{proof}

Hence, the following theorem provides a characterization of ineffability in terms of splitting numbers. After completing the work on this paper, with the generous help of Saka\'{e} Fuchino, we were able to obtain a copy of the unpublished preprint \cite{Kamo:SplittingNumbers} by Shizuo Kamo, and learned that the equivalence between (1) and the statement about the splitting number $\sn_{\Stationary}(\kappa)$ in (4) of the following theorem was known to him.

\begin{theorem}
\label{theorem:CharacterizationOfIneffability}
Let $\kappa$ be a regular cardinal. The following are equivalent.
\begin{enumerate}[label=\textnormal{(\arabic*)}]
\item $\kappa$ is ineffable.
\item $\SP{\kappa}{\Stationary,\Nonempty}$ fails.
\item $\SP{\kappa}{\Stationary,\Unbounded}$ fails. Equivalently, $\sn_{\Stationary,\Unbounded}(\kappa)>\kappa$.
\item $\SP{\kappa}{\Stationary}$ fails. Equivalently, $\sn_\Stationary(\kappa)>\kappa$.
\end{enumerate}
It follows that for any family $\calB\sub\power{\kappa}$ satisfying $\Stationary\sub\calB\sub\Nonempty$, the failure of $\SP{\kappa}{\Stationary,\calB}$ characterizes the ineffability of $\kappa$.
\end{theorem}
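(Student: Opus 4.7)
The plan is to verify $(1)\iff(2)$, the trivial chain $(2)\implies(3)\implies(4)$, and the closure step $(4)\implies(2)$. For the first, I would just apply the case $\calA = \Stationary$ of Theorem \ref{theorem:GeneralABranchesForKappaListsAndGeneralIneffability}, which gives it directly. The trivial chain follows by monotonicity: whenever $\calB' \sub \calB$, any $\SP{\kappa}{\Stationary,\calB'}$-sequence is automatically a $\SP{\kappa}{\Stationary,\calB}$-sequence, so via $\Stationary \sub \Unbounded \sub \Nonempty$ the failure of $\SP{\kappa}{\Stationary,\Nonempty}$ propagates upward to the failures of the stronger principles.

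For the closure step I would prove the sharper statement that any $\kappa$-list $\vec d$ witnessing $\SP{\kappa}{\Stationary,\Nonempty}$ already witnesses $\SP{\kappa}{\Stationary}$, and then invoke the contrapositive. Fix such a $\vec d$ and assume toward a contradiction that some stationary $S$ is not split by $\vec d$ into two stationary pieces. For each $\beta < \kappa$, the union $S^+_\beta \cup S^-_\beta = S \ohne (\beta+1)$ is stationary, so at least one of $S^+_\beta, S^-_\beta$ is stationary; by the assumption on $S$, exactly one is. Setting $B = \{\beta < \kappa \st S^+_\beta \text{ is stationary}\}$ and choosing, for each $\beta$, a club $C_\beta$ disjoint from whichever of the two sets is nonstationary, the set $T = S \cap \dintersection_{\beta<\kappa} C_\beta$ is stationary, and for $\alpha \in T$ and $\beta < \alpha$, membership in $C_\beta$ together with $\alpha \in S$ forces $\beta \in d_\alpha \iff \beta \in B$. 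Hence $d_\alpha = B \cap \alpha$ for every $\alpha \in T$, so $B$ is an ineffable branch of $\vec d$, contradicting Theorem \ref{theorem:GeneralABranchesForKappaListsAndGeneralIneffability}.

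The final clause is then immediate: for any $\calB$ with $\Stationary \sub \calB \sub \Nonempty$, there is a trivial chain $\SP{\kappa}{\Stationary} \implies \SP{\kappa}{\Stationary,\calB} \implies \SP{\kappa}{\Stationary,\Nonempty}$, and the outermost two principles will have just been shown equivalent, pinning down the middle one as well. The main -- really the only nontrivial -- obstacle is the diagonal-intersection argument in the middle paragraph; it is a standard Fodor-style coherence maneuver, but is responsible for the substantive content of the theorem, namely that for $\kappa$-lists, splitting stationary sets merely into nonempty pieces is automatically upgraded to splitting them into stationary pieces.
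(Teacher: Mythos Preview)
Your proposal is correct and follows essentially the same approach as the paper: the equivalence $(1)\iff(2)$ via Theorem~\ref{theorem:GeneralABranchesForKappaListsAndGeneralIneffability}, the trivial chain $(2)\implies(3)\implies(4)$, and the closing step via the diagonal-intersection argument are all exactly as in the paper. The only cosmetic difference is that you close the cycle as $(4)\implies(2)$ phrased at the level of a single list (any $\SP{\kappa}{\Stationary,\Nonempty}$-sequence is already a $\SP{\kappa}{\Stationary}$-sequence), while the paper phrases it as $(4)\implies(1)$; the underlying coherence argument is identical.
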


\begin{proof}
(1)$\iff$(2) follows from Theorem \ref{theorem:GeneralABranchesForKappaListsAndGeneralIneffability}.

(2)$\implies$(3)$\implies$(4) is trivial.

(4)$\implies$(1): Assuming $\SP{\kappa}{\Stationary}$ fails, we have to show that $\kappa$ is ineffable. So let $\vec{d}$ be a $\kappa$-list. By assumption, it does not split stationary sets into stationary sets. So by Lemma \ref{lem:CoherencyOnStationarySet=NotSplittingStationaryIntoStationary}, $\vec{d}$ coheres on a stationary set. By the remark after Definition \ref{def:IneffableBranchesEtc}, $\vec{d}$ has an ineffable branch.

The statements about the splitting numbers follow because the families we are splitting into are tail sets, so that Observation \ref{observation:IndependentOfInitialSegmentsGivesSplitEquivalentToGeneralSplit} applies.
\end{proof}

\begin{corollary}
\label{cor:CharacterizationOfAlmostIneffability}
Let $\kappa$ be a regular cardinal. The following are equivalent.
\begin{enumerate}[label=\textnormal{(\arabic*)}]
\item $\kappa$ is almost ineffable.
\item $\SP{\kappa}{\Unbounded,\Nonempty}$ fails.
\end{enumerate}
\end{corollary}

\begin{proof}
This follows from Theorem \ref{theorem:GeneralABranchesForKappaListsAndGeneralIneffability} using $\calA=\Unbounded_\kappa$.
\end{proof}

Unlike in Theorem \ref{theorem:CharacterizationOfIneffability}, where we showed that for regular $\kappa$, the principles $\SP{\kappa}{\Stationary,\Nonempty}$ and $\SP{\kappa}{\Stationary,\Stationary}$ are equivalent, in fact it is not the case that $\SP{\kappa}{\Unbounded,\Nonempty}$ and $\SP{\kappa}{\Unbounded,\Unbounded}$ are equivalent. Indeed, the former is equivalent to $\kappa$ not being almost ineffable, and the latter is equivalent to $\kappa$ not being weakly compact. In fact, there is no family $\calB$ that is a tail set closed under supersets such that the failure of $\SP{\kappa}{\Unbounded,\calB}$ would characterize $\kappa$'s almost ineffability in any nontrivial way:

If $\kappa$ is not almost ineffable, then we would have to have that $\SP{\kappa}{\Unbounded,\calB}$ holds. Since $\calB$ is supposed to be closed under supersets, it would follow that $\Unbounded\sub\calB$. But $\calB$ shouldn't be $\Unbounded_\kappa$, since that characterizes the failure of weak compactness, and $\kappa$ might be weakly compact. So $\calB$ should contain a bounded subset of $\kappa$, but then since it is a tail set, it would contain the empty set, and then it would have to contain every subset of $\kappa$, being closed under supersets. But then the principle would trivially hold, so its failure couldn't characterize almost ineffability.

Of course, this statement has to be taken with a grain of salt, as letting $\calB=\{A\sub\kappa\st\kappa$ is not almost ineffable$\}$, we trivially have that $\kappa$ is almost ineffable iff $\SP{\kappa}{\Unbounded,\calB}$ fails (hence the disclaimer ``in any nontrivial way.'') Still, Corollary \ref{cor:CharacterizationOfAlmostIneffability} provides a characterization of almost ineffability by the failure of a split principle that does \emph{not} translate to a statement about splitting numbers or splitting families in any meaningful way.

Next, let us make a connection to the ineffable tree property, introduced in \cite[Def.~1.2.7]{Weiss:Diss}; see also \cite{CHMNSU:ITPandSCH}.

\begin{definition}
A regular cardinal $\kappa$ has the \emph{ineffable tree property} if every thin $\kappa$-list has an ineffable branch. This is abbreviated $\mathsf{ITP}(\kappa)$.
\end{definition}

\begin{corollary}
\label{cor:CharacterizationOfITP}
Let $\kappa$ be a regular cardinal. The following are equivalent.
\begin{enumerate}[label=\textnormal{(\arabic*)}]
\item $\mathsf{ITP}(\kappa)$.
\item Thin $\SP{\kappa}{\Stationary,\Nonempty}$ fails.
\item Thin $\SP{\kappa}{\Stationary,\Unbounded}$ fails.
\item Thin $\SP{\kappa}{\Stationary}$ fails.
\end{enumerate}
\end{corollary}

\begin{proof}
This follows immediately from Theorem \ref{theorem:GeneralABranchesForKappaListsAndGeneralIneffability} and Lemma \ref{lem:CoherencyOnStationarySet=NotSplittingStationaryIntoStationary}.
\end{proof}

We briefly digress to point out a connection to square principles. These principles have a rich history, of course, beginning with Jensen \cite{FS}. The form we are going to touch upon is commonly denoted $\Box(\kappa)$; see Schimmerling \cite{Schimmerling:CoherentSequencesAndThreads} and Todorcevic \cite{Todorcevic:WalksBook}, for example.

\begin{definition}
\label{def:square(kappa)}
A $\kappa$-list $\vec{c}=\seq{c_\alpha}{\alpha<\kappa}$ is a \emph{coherent sequence} if for every limit ordinal $\alpha<\kappa$, $c_\alpha$ is a club subset of $\alpha$ and for every $\beta$ which is a limit point of $c_\alpha$, $c_\beta=c_\alpha\cap\beta$. A set $t\sub\kappa$ is a \emph{thread} of $\vec{d}$ if $\vec{c}{{{}^\frown}}t$ is a coherent sequence, i.e., if in the case that $\kappa$ is a limit ordinal, $t$ is club in $\kappa$ and for every limit point $\alpha<\kappa$ of $t$, $c_\alpha=t\cap\alpha$. $\vec{c}$ is a $\Box(\kappa)$-sequence if it is a coherent sequence with no thread.
\end{definition}

\begin{observation}
Let $\kappa$ be a cardinal of uncountable cofinality, and let $\vec{c}$ be a coherent sequence of length $\kappa$. Then the following are equivalent:
\begin{enumerate}[label=\textnormal{(\arabic*)}]
\item
\label{item:Box(kappa)}
$\vec{c}$ is a $\Box(\kappa)$-sequence.
\item
\label{item:SPStatNonempty}
$\vec{c}$ is a $\SP{\kappa}{\Stationary,\Nonempty}$-sequence.
\item
\label{item:SPClubNonempty}
$\vec{c}$ is a $\SP{\kappa}{\mathsf{club},\Nonempty}$-sequence. Here, $\mathsf{club}$ denotes the collection of all club subsets of $\kappa$.
\end{enumerate}
\end{observation}

\begin{proof}
\ref{item:Box(kappa)}$\implies$\ref{item:SPStatNonempty}:
Let $\vec{c}$ be a $\Box(\kappa)$-sequence. If $\vec{c}$ is not a $\SP{\kappa}{\Stationary,\Nonempty}$ sequence, there is a stationary set $S\sub\kappa$ which is not split by $\vec{c}$ into two nonempty sets, which by Lemma \ref{lemma:Coherence=NotSplittingIntoNonempty} means that $\vec{c}$ coheres on $S$. Let $t=\bigcup_{\alpha\in S}c_\alpha$. Then for all $\alpha\in S$, $t\cap\alpha=c_\alpha$. Since unboundedly many $\alpha\in S$ are limit ordinals, as $\cf(\kappa)>\omega$, we have that $t\cap\alpha$ is unboundedly often a club subset of $\alpha$. Hence, $t$ is club in $\kappa$. For the same reason, $t$ is a thread: if $\alpha<\kappa$ is a limit point of $t$, then we can find a $\beta>\alpha$, $\beta\in S$ which is a limit ordinal. Then $t\cap\beta=c_\beta$, so $\alpha$ is a limit point of $c_\beta$, so $t\cap\alpha=c_\beta\cap\alpha=c_\alpha$ since $\vec{c}$ is coherent. So $\vec{c}$ is not a $\Box(\kappa)$-sequence, a contradiction.

\ref{item:SPStatNonempty}$\implies$\ref{item:SPClubNonempty}: This implication is trivial, as every club subset of $\kappa$ is stationary.

\ref{item:SPClubNonempty}$\implies$\ref{item:Box(kappa)}:
Suppose $\vec{c}$ is a coherent $\SP{\kappa}{\mathsf{club},\Nonempty}$-sequence. If it is not a $\Box(\kappa)$-sequence, then it has a thread $t$. But then $\vec{c}$ would cohere on the set $t'$ of limit points of $t$. Thus, by Lemma \ref{lemma:Coherence=NotSplittingIntoNonempty}, $\vec{c}$ would not split $t'$ into nonempty sets. But $t'$ is club in $\kappa$, so $\vec{c}$ would not be a $\SP{\kappa}{\mathsf{club},\Nonempty}$-sequence after all, a contradiction.
\end{proof}

Let us further consider the relationships between $\SP{\kappa,\tau}{\Stationary}$, \linebreak $\SP{\kappa,\tau}{\Unbounded}$ and $\SP{\kappa,\tau}{\Stationary,\Unbounded}$ at an uncountable regular cardinal $\kappa$. Clearly, the latter is the weakest of these principles; it is implied by each of the former. In other words:
\begin{equation}
\label{eqn:splittingnumbers stub}
\mathfrak{s}_{\Stationary,\Unbounded}(\kappa)\le \min(\mathfrak{s}_{\Stationary}(\kappa),\mathfrak{s}_{\Unbounded}(\kappa)).
\end{equation}
Since the failure of $\SP{\kappa,\kappa}{\Unbounded}$ characterizes the weak compactness of $\kappa$ while the failure of $\SP{\kappa,\kappa}{\Stationary}$ characterizes its ineffability, we have that $\SP{\kappa,\kappa}{\Unbounded}$ implies $\SP{\kappa,\kappa}{\Stationary}$. This would seem to suggest that in general, $\SP{\kappa,\tau}{\Unbounded}$ implies $\SP{\kappa,\tau}{\Stationary}$, that is, that $\mathfrak{s}_{\Stationary}(\kappa)\le\mathfrak{s}(\kappa)$. The following corollary is going in that direction.

\begin{corollary}
\label{cor:StationarySNbelowUnboundedSN}
Let $\kappa$ be a regular uncountable cardinal. If $\mathfrak{s}(\kappa),\mathfrak{s}_\Stationary(\kappa)\le\kappa^+$, then $\mathfrak{s}_\Stationary(\kappa)\le\mathfrak{s}(\kappa)$.
\end{corollary}

\noindent\emph{Note:} The proof gives more information.

\begin{proof} Let $\tau=\mathfrak{s}(\kappa)$.

\emph{Case 1:} $\tau<\kappa$.
Then, by applying Corollary \ref{cor:BothWaysForRegulars} to $\Unbounded_\kappa$, it follows that $2^\tau\ge\kappa$, and $\tau$ is the least such cardinal. But by the same corollary, this time applied to $\Stationary_\kappa$, $\tau$ must also be $\mathfrak{s}_\Stationary(\kappa)$. So in this case, we have that $\mathfrak{s}(\kappa)=\mathfrak{s}_{\Stationary}(\kappa)$.

\emph{Case 2:} $\tau=\kappa$.
Then, $\kappa$ is inaccessible, but not weakly compact, so the principle $\SP{\kappa}{\Unbounded}$ holds, by Corollary \ref{corollary:SplitKappaIffNotWC}. It follows that $\kappa$ is not ineffable, so that $\SP{\kappa}{\Stationary}$ holds by Theorem \ref{theorem:CharacterizationOfIneffability}. But by Corollary \ref{cor:BothWaysForRegulars}, $\SP{\kappa,\tau'}{\Stationary}$ fails for $\tau'<\kappa$, as $\kappa$ is inaccessible. So $\tau=\mathfrak{s}_\Stationary(\kappa)$. Thus, in this case we also have that $\mathfrak{s}(\kappa)=\mathfrak{s}_\Stationary(\kappa)$.

\emph{Case 3:} $\tau=\kappa^+$.
Then $\kappa$ is weakly compact by Corollary \ref{corollary:SplitKappaIffNotWC}. If $\kappa$ is not ineffable, then as in the previous case, $\mathfrak{s}_\Stationary(\kappa)=\kappa<\mathfrak{s}(\kappa)$, and if $\kappa$ is ineffable then we get $\mathfrak{s}_\Stationary(\kappa)\ge\kappa^+$, so by assumption, $\mathfrak{s}_\Stationary(\kappa)=\kappa^+$. Thus, in both cases, $\mathfrak{s}_\Stationary(\kappa)\le\mathfrak{s}(\kappa)$.
\end{proof}

%[the following lemma is somewhat redundant but gives a direct argument for a local result, and may be useful{?} It makes it clear how it is relevant that the size of the family is at most $\kappa$:]

\begin{lemma}
Let $\kappa$ be an uncountable regular cardinal, and let $\mathcal{F}\sub\power{\kappa}$ be an $(\Unbounded,\Unbounded)$-splitting family of cardinality $\tau\le\kappa$. Then $\mathcal{F}$ is also $(\Stationary,\Stationary)$-splitting.
\end{lemma}

\noindent\emph{Note:} In particular, if $\mathfrak{s}(\kappa)\le\kappa$, then $\mathfrak{s}_{\Stationary}(\kappa)\le\mathfrak{s}(\kappa)$.

\begin{proof}
Assume the contrary, and let $S\sub\kappa$ be a stationary set that's not split into stationary sets by $\mathcal{F}$. Thus, for any $X\in\mathcal{F}$, exactly one of $S\cap X$ and $S\ohne X$ is nonstationary, that is, either there is a club $C$ such that $(S\cap C)\sub\kappa\ohne X$ or $(S\cap C)\sub X$.

Let's say that $\mathcal{F}'$ is a flip of $\mathcal{F}$ if there is a function $f:\mathcal{F}\To 2$ such that $\mathcal{F}'=\{X\in\mathcal{F}\st f(X)=0\}\cup\{\kappa\ohne X\st X\in\mathcal{F}\land f(X)=1\}$. Clearly, any flip of $\mathcal{F}$ is also $(\Unbounded,\Unbounded)$-splitting, and furthermore, there is a flip $\mathcal{F}'$ of $\mathcal{F}$ such that for every $X\in\mathcal{F}'$ there is a club $C\sub\kappa$ such that we're always in the second case, that is, $(S\cap C)\sub X$. So we may assume that already $\mathcal{F}$ is like that.

Let's write $\mathcal{F}=\{X_\alpha\st\alpha<\kappa\}$ (this may be an enumeration with repetitions). For $\alpha<\kappa$, let $C_\alpha\sub\kappa$ be club such that $S\cap C_\alpha\sub X_\alpha$. Let $D=\bigtriangleup_{\alpha<\kappa}C_\alpha$. So $D$ is club in $\kappa$, making $S\cap D$ stationary and hence unbounded. Since $\mathcal{F}$ is $(\Unbounded,\Unbounded)$-splitting, there is an $\alpha^*<\kappa$ such that $X_{\alpha^*}$ splits $S\cap D$ into unbounded sets. But by definition of $D$, $D\ohne(\alpha^*+1)\sub C_{\alpha^*}$, and so, $(S\cap D)\ohne(\alpha^*+1)\sub S\cap C_{\alpha^*}\sub X_{\alpha^*}$. As a consequence, $(S\cap D)\ohne X_{\alpha^*}\sub\alpha^*+1$, so $(S\cap D)\ohne X_{\alpha^*}$ is not unbounded in $\kappa$, a contradiction.
\end{proof}

Thus, it is interesting to investigate universes with high splitting numbers, so we end this section with the following result, which shows that the method of pushing $\mathfrak{s}(\kappa)$ up by forcing, originally devised by Kamo (and independently by Miyamoto, according to \cite{Suzuki:OnSplittingNumbers}), and exposed in Zapletal \cite[Lemma 5]{Zapletal:SplittingNumberAtUncountableCards}, actually achieves slightly more. We learned from the (unfortunately) unpublished preprint \cite{Kamo:SplittingNumbers} by Kamo, to which we gained access after observing it ourselves, that the crucial claim $(**)$ in the proof, as well as its relevance for the stationary splitting number, was known to him.

\begin{theorem}
\label{thm:PushingUpStatUnbddSplittingNumber}
Suppose $\kappa$ is Laver-indestructibly supercompact\footnote{By a Laver-indestructibly supercompact cardinal, we mean a supercompact cardinal $\kappa$ whose supercompactness is indestructible under $\kappa$-directed closed forcing. Any supercompact cardinal can be forced to be Laver-indestructibly supercompact, as shown in Laver \cite{Laver:Indestructibility}.} and $\lambda>\kappa^+$ is regular. Then there is a cardinal- and cofinality-preserving forcing extension in which $\kappa$ remains regular and
\[\mathfrak{s}_{\Stationary,\Unbounded}(\kappa)=\sn_\Stationary(\kappa)=\sn_\Unbounded(\kappa)=\lambda\]
\end{theorem}

\begin{proof}
The forcing will be an iteration of ``Prikry-Mathias forcing'' of the form $\bbM=\bbM(U)$, for a normal ultrafilter $U$ on $\kappa$. The proof mimics the usual way of increasing the splitting number by iterating Mathias forcing, as described in \cite[p.~444]{Blass2009:CardinalCharacteristics}.

Conditions in $\bbM$ are of the form $\kla{s,A}$, where $s\in[\kappa]^{{<}\kappa}$, $A\in U$ and $s\sub\min(A)$. The ordering is $\kla{s,A}\le\kla{t,B}$ iff $t\sub s$, $s\ohne t\sub B$ and $A\sub B$. So it is just like Prikry forcing \cite{Prikry:Dissertation}, except that the first component of a condition does not have to be finite, and it is just like Mathias forcing  \cite{Mathias:HappyFamilies}, except that we use $\kappa$ instead of $\omega$.

It is easy to see that $\bbM$ is $\kappa$-directed closed. It is also well-met, for given compatible conditions $\kla{s_0,A_0}$ and $\kla{s_1,A_1}$, they have the weakest common extension $\kla{s_0\cup s_1,A_0\cap A_1}$. Just like Prikry forcing, it is $\kappa^+$-c.c.; actually, just like Prikry forcing, it is $\kappa$-centered.

An $\bbM$-generic filter $G$ gives rise to the Prikry-Mathias set
\[g=\bigcup\{s\st\exists A\in U\ \kla{s,A}\in G\},\]
which behaves much like a Prikry sequence: given any $A\in U$, $g\sub^*A$, i.e., $g\ohne A$ is bounded in $\kappa$ (in Prikry forcing, it would be finite). It follows that

\begin{itemize}
\item[$(*)$] for any $X\in\power{\kappa}^\V$, $g\sub^*X$ or $g\sub^*(\kappa\ohne X)$.
\end{itemize}%
This is because either $X\in U$ or $\kappa\ohne X\in U$.
For our purposes, we also need to know:

\begin{itemize}
\item[$(**)$] In $\V[G]$, $g$ is a stationary subset of $\kappa$.
\end{itemize}

\begin{proof}[Proof of $(**)$. ]
Assume the contrary. Then there are a condition $p=\kla{s,A}\in G$ and an $\bbM$-name $\dot{C}$ such that $p\forces$ ``$\dot{C}$ is a club subset of $\check{\kappa}$ such that $\dot{C}\cap\dot{g}=\leer$.'' Let $\dot{f}$ be an $\bbM$-name such that $p\forces$ ``$\dot{f}$ is the monotone enumeration of $\dot{C}$.'' Define in $\V$ a sequence $\seq{p_\alpha}{\alpha<\kappa}$ of conditions in $\bbM$ of the form $p_\alpha=\kla{s_\alpha,A_\alpha}$ such that
\begin{enumerate}
  \item $p_0\le p$.
  \item If $\alpha<\beta<\kappa$, then $p_\beta\le p_\alpha$.
  \item $p_\alpha$ decides the value of $\dot{f}(\check{\alpha})$. That is, there is a $\xi=\xi_\alpha$ such that $p_\alpha\forces$``$\dot{f}(\check{\alpha})=\check{\xi}$.''
  \item If $\alpha$ is a limit ordinal, then $s_\alpha=\bigcup_{\beta<\alpha}s_\beta$ and $A_\alpha=\bigcap_{\beta<\alpha}A_\beta$. That is, $p_\alpha$ is the weakest lower bound of $\seq{p_\beta}{\beta<\alpha}$.
\end{enumerate}
In the successor step of the construction, given $p_\alpha$, we can let $p_{\alpha+1}$ be any condition extending $p_\alpha$ and deciding $\dot{f}(\check{\alpha})$. The point in the limit step is that if $\alpha$ is a limit ordinal and $p_\alpha$ is defined as described, then $p_\alpha$ decides $\dot{f}\rest\check{\alpha}$, but it then automatically also decides $\dot{f}(\check{\alpha})$ to be the supremum of the prior values of $\dot{f}$, since it forces that $\dot{f}$ enumerates a club. Set
\[A^*=\dintersection_{\alpha<\kappa}A_\alpha.\]
Since $U$ is normal, $A^*\in U$.
Let $\bar{f}:\kappa\To\kappa$ be defined by $\bar{f}(\alpha)=\xi_\alpha$, i.e., $p_\alpha\forces\dot{f}(\check{\alpha})=(\bar{f}(\alpha))\check{}$. So $\bar{f}$ is a continuous, increasing function, and since $\kappa$ is regular, the set $F=\{\alpha<\kappa\st\bar{f}(\alpha)=\alpha\}$ of fixed points is club, and hence in $U$, again by normality of $U$.

Now pick a limit ordinal $\alpha\in A^*\cap F$. We then have that
\[p_\alpha=\kla{s_\alpha,A_\alpha}\forces\dot{f}(\check{\alpha})=\check{\alpha}.\]
Since $\alpha\in A^*$, we have that $\alpha\in\bigcap_{\beta<\alpha}A_\beta=A_\alpha$. So $s_\alpha\sub\alpha\in A_\alpha$. Let
\[p'_\alpha=\kla{s_\alpha\cup\{\alpha\},A_\alpha\ohne(\alpha+1)}.\]
Then $p'_\alpha\le p_\alpha$, but
\[p'_\alpha\forces\check{\alpha}\in\dot{C}\cap\dot{g}.\]
This is a contradiction, since $p'_\alpha\le p\forces\dot{C}\cap\dot{g}=\leer$.
\qedhere{${}_{(**)}$}
\end{proof}

For an uncountable regular cardinal $\tau$, let $\P=\Add(\tau)$ be the usual forcing to add a Cohen subset to $\tau$. So it consists of functions $f:\alpha\To 2$, for $\alpha<\tau$, ordered by reverse inclusion. We will use the following fact, where $A$ is the $\P$-generic subset of $\tau$ added by $\P$.

\begin{enumerate}
\item[$({*}{*}{*})$] $A$ splits every unbounded set $X\in\power{\tau}\cap\V$ into unbounded sets, and it splits every set $S\in\power{\tau}\cap\V$ such that $(S\ \text{is stationary})^\V$ into stationary sets (in the sense of $\V[A]$).
\end{enumerate}

\begin{proof}[Proof of $({*}{*}{*})$. ]
The first part is very easy to see, so we prove only the second part, which is also quite easy. So let $S\in\V$ be stationary in $\tau$, and let $C\in\V[A]$ be club in $\tau$. We have to show that both $(S\cap A)\cap C$ and $(S\ohne A)\cap C$ are nonempty. Let $f:\tau\To C$ be the monotone enumeration of $C$, and let $C=\dot{C}^G$, $f=\dot{f}^G$, where $\dot{C}$ and $\dot{f}$ are $\P$-names and $G$ is the $\P$-generic filter associated to $A$. Let $\dot{A}$ be the canonical name for $A$. Suppose that at least one of $(S\cap A)\cap C$ and $(S\ohne A)\cap C$ were empty, and let $p\in G$ force this, that is, $p$ forces ``$(\check{S}\cap\dot{A})\cap\dot{C}$ is empty'' or $p$ forces ``$(\check{S}\ohne\dot{A})\cap\dot{C}$ is empty.'' Choose $p$ strong enough that $p$ also forces that $\dot{C}$ is club in $\check{\tau}$ and that $\dot{f}$ is the monotone enumeration of $\dot{C}$. As in the proof of $(**)$, build a weakly decreasing, continuous sequence $\seq{p_\alpha}{\alpha<\tau}$ of conditions in $\V$ such that $p_0\le p$ and $p_\alpha||\dot{f}(\check{\alpha})$. Continuity means that when $\alpha<\tau$ is a limit ordinal, then $p_\alpha=\bigcup_{\beta<\alpha}p_\beta$. Define $\bar{f}:\tau\To\tau$ by letting $\bar{f}(\alpha)$ be the unique $\xi<\tau$ such that $p_\alpha\forces\dot{f}(\check{\alpha})=\check{\xi}$. Then $\bar{f}$ increasing and continuous, so $F=\{\alpha<\tau\st\bar{f}(\alpha)=\alpha\}$ is club in $\tau$. Moreover, the function mapping $\alpha<\tau$ to $\dom(p_\alpha)$ is weakly increasing and continuous, so the set $D$ of its fixed points is also club in $\tau$. Now let $\alpha\in S\cap F\cap D$. Let $p^+_\alpha=p_\alpha\cup\{\kla{\alpha,1}\}$ and $p^-_\alpha=p_\alpha\cup\{\kla{\alpha,0}\}$. Then $p^+_\alpha$ forces that $\check{\alpha}\in(\check{S}\cap\dot{A})\cap\dot{C}$ and $p^-_\alpha$ forces that $\check{\alpha}\in(\check{S}\ohne\dot{A})\cap\dot{C}$, but both extend $p$, so this contradicts what $p$ forced.
\qedhere{${}_{({*}{*}{*})}$}
\end{proof}

We now form an iteration of length $\lambda$, $\seq{\P_\alpha}{\alpha\le\lambda}$, $\seq{\dot{\Q}_\alpha}{\alpha<\lambda}$ with ${<}\kappa$ support in which every iterand $\dot{\Q}_\alpha$ is forced by $\P_\alpha$ to be of the form $\bbM(\dot{U}_\alpha)$. Since we use ${<}\kappa$ support and each iterand is forced to be $\kappa$-directed closed, $\kappa$-centered and well-met, each $\P_\alpha$ is $\kappa$-directed closed and $\kappa^+$-c.c., and in particular, $\kappa$ remains supercompact in the corresponding extension, so that a normal ultrafilter on $\kappa$ can be found, keeping the iteration going.

Let $G$ be $\P_\lambda$-generic over $\V$. Since $\P_\lambda$ is $\kappa$-directed closed and $\kappa^+$-c.c., $\P_\lambda$ preserves cardinals and cofinalities.

We claim that in $\V[G]$, there is no family of cardinality less than $\lambda$ that splits stationary subsets of $\kappa$ into unbounded subsets of $\kappa$, showing that
\[\sn_{\Stationary,\Unbounded}(\kappa)\ge\lambda.\]
Suppose $\mathcal{F}$ were such a family. Since $\lambda>\kappa^+$ is regular in $\V[G]$, $\P_\lambda$ is $\kappa^+$-c.c., and we used ${<}\kappa$ support, we can find an $\alpha<\lambda$ such that $\mathcal{F}\in\V[G\rest\alpha]$. Let $g_\alpha$ be the Prikry-Mathias set corresponding to $G_\alpha$. By $(*)$, no element of $\V[G\rest\alpha]$ splits $g_\alpha$ into unbounded sets. Moreover, by $(**)$, $g_\alpha$ is stationary in $\V[G\rest\alpha][G_\alpha]$, and since the tail of the iteration is $\kappa$-directed closed, $g_\alpha$ remains stationary in $\V[G]$. Thus, in $\V[G]$, $g_\alpha$ is a stationary subset of $\kappa$ that is not split into unbounded sets by $\mathcal{F}$, a contradiction.

By the inequality (\ref{eqn:splittingnumbers stub}), it remains to show that in $\V[G]$,
\[\max(\sn_{\Stationary}(\kappa),\sn_{\Unbounded}(\kappa))\le\lambda.\]
We show that in $\V[G]$ there is a family of stationary subsets of $\kappa$, of size $\lambda$, that splits stationary subsets of $\kappa$ into stationary sets and that also splits unbounded subsets of $\kappa$ into unbounded sets. Namely, since the iteration uses ${<}\kappa$ support, for every $\alpha<\lambda$, there is in $\V[G\rest(\alpha+\kappa)]$ a $c_\alpha\sub\kappa$ which is $\Add(\kappa)$-generic over $\V[G\rest\alpha]$. By $({*}{*}{*})$, $c_\alpha$ splits every unbounded subset of $\kappa$ which is in $\V[G\rest\alpha]$ into unbounded sets, and it splits every subset of $\kappa$ in $\V[G\rest\alpha]$ which is stationary in $\V[G\rest\alpha]$ into sets which are stationary in $\V[G\rest(\alpha+\kappa)]$. Since the tail of the iteration is $\kappa$-closed, again, these sets remain stationary in $\V[G]$. And since every subset of $\kappa$ in $\V[G]$ occurs in some $\V[G\rest\alpha]$, this shows that the family $\{c_\alpha\st\alpha<\lambda\}$ is both $(\Unbounded_\kappa,\Unbounded_\kappa)$-splitting and  $(\Stationary_\kappa,\Stationary_\kappa)$-splitting in $\V[G]$.
\end{proof}

\noindent\emph{Note:} it seems likely that the methods of Ben-Neria \& Gitik developed in \cite{BenNeria-Gitik:SplittingNumberAtRegularCardinals} can be used to achieve the situation of the previous theorem using weaker large cardinal assumptions. 
% !TEX root = Splitk.tex
\subsection{Subtle cardinals and Souslin trees}
\label{subsec:Subtles}

Our ability to express coherence as in Lemma \ref{lemma:Coherence=NotSplittingIntoNonempty} immediately puts us in a position to characterize subtle cardinals, originally introduced in \cite{Jensen:CombinatorialProperties}. We emphasize that just in the case of ineffability, the relevant split principles involve splitting into nonempty sets, a class that's not a tail set, and so, these split principles don't translate to statements about splitting numbers.

Recall that by definition, a regular cardinal $\kappa$ is subtle iff for every $\kappa$-list $\vec{d}$ and every club $C\sub\kappa$, there are $\alpha<\beta$, both in $C$, such that $d_\alpha=d_\beta\cap\alpha$.

\begin{lemma}
\label{lem:SubtleCharacterizationI}
An uncountable, regular cardinal $\kappa$ is subtle iff for every club $C\sub\kappa$, $\SP{\kappa}{[C]^2,\Nonempty}$ fails.
\end{lemma}

\begin{proof}
The subtlety of $\kappa$ can be equivalently expressed by saying that for every $\kappa$-list $\vec{d}$ and every club $C\sub\kappa$, there is a two-element subset $\{\alpha,\beta\}$ of $C$ on which $\vec{d}$ coheres. But by Lemma \ref{lemma:Coherence=NotSplittingIntoNonempty}, $\vec{d}$ coheres on $\{\alpha,\beta\}$ iff $\vec{d}$ does not split $\{\alpha,\beta\}$ into nonempty sets. Thus, an uncountable, regular cardinal $\kappa$ is subtle iff for every club $C\sub\kappa$, there is no $\kappa$-list that splits $[C]^2$ into nonempty sets, as wished.
\end{proof}

Note that the spit principles used in the characterization of subtlety involve splitting collections of sets which are not closed under supersets.

It turns out that applying a similar idea to thin lists can be used to characterize when a regular uncountable cardinal $\kappa$ carries a Souslin tree.

\begin{theorem}
The following are equivalent for a regular uncountable cardinal $\kappa$:
\begin{enumerate}[label=\textnormal{(\arabic*)}]
  \item
  \label{item:ThereIsASouslinTree}
  There is a $\kappa$-Souslin tree.
  \item
  \label{item:WeirdSplitCondition}
  There is a thin $\kappa$-list $\vec{d}$ such that
  \begin{enumerate}[label=\textnormal{(\alph*)}]
    \item
    \label{item:SplitKappaSequence}
    $\vec{d}$ witnesses that $\Split{\kappa}$ holds.
    \item
    \label{item:Not2EmptySequence}
    For any unbounded $A\sub\kappa$, $\vec{d}$ is \emph{not} a $\SP{\kappa}{[A]^2,\Nonempty}$-sequence.
  \end{enumerate}
\end{enumerate}
\end{theorem}

\begin{proof}
\ref{item:ThereIsASouslinTree}$\implies$\ref{item:WeirdSplitCondition}: By standard arguments, if there is a $\kappa$-Souslin tree, then there is a normal one that is $2$-splitting, and such a tree is isomorphic to a sequential tree which is a subset of ${}^{{<}\kappa}2$, so assume our tree is like that, and call it $T$. For $\alpha<\kappa$, pick some $s_\alpha:\alpha\To 2$ be in $T$, and let $d_\alpha=\{\xi<\alpha\st s_\alpha(\xi)=1\}$. We will show that $\vec{d}$ satisfies the conditions in \ref{item:WeirdSplitCondition}. To see this, note that $T_{\vec{d}}$ is a downward closed subtree of $T$, of height $\kappa$, and hence it is a $\kappa$-Souslin tree. In particular, it is a $\kappa$-Aronszajn tree, so by Theorem \ref{theorem:ThinListSplitsIffItsTreeIsAronszajn}, $\vec{d}$ is a thin $\kappa$-list witnessing that $\Split{\kappa}$ holds, so that condition \ref{item:SplitKappaSequence} is satisfied. To see that condition \ref{item:Not2EmptySequence} is satisfied, let $A\sub\kappa$ be unbounded. Since $T_{\vec{d}}$ is a $\kappa$-Souslin tree, the set $\{d_\alpha\st\alpha\in A\}\sub T_{\vec{d}}$ is not an antichain. This means that there are $\alpha,\beta\in A$ with $\alpha<\beta$ such that $d_\alpha=d_\beta\cap\alpha$. But then, $\{\alpha,\beta\}\in[A]^2$ cannot be split into two nonempty sets by $\vec{d}$, as in Lemma \ref{lemma:Coherence=NotSplittingIntoNonempty}. Thus, $\vec{d}$ is not a $\SP{\kappa}{[A]^2,\Nonempty}$-sequence, as wished.

\ref{item:WeirdSplitCondition}$\implies$\ref{item:ThereIsASouslinTree}: Let $\vec{d}$ be a $\kappa$-list as in \ref{item:WeirdSplitCondition}. By Theorem \ref{theorem:ThinListSplitsIffItsTreeIsAronszajn}, $T_{\vec{d}}$ is a $\kappa$-Aronszajn tree. We claim that it is a $\kappa$-Souslin tree. To see this, suppose it had an antichain $I$ of size $\kappa$. We define an antichain $I_0$, also of cardinality $\kappa$, based on $I$ but translated to the list $\vec d$, as follows. For $\alpha<\kappa$, let $d^c_\alpha:\alpha\To 2$ be the characteristic function of $d_\alpha$. Then, for every $x\in I$, let $\alpha_x<\kappa$ be least such that $x=d^c_{\alpha_x}\rest\dom(x)$, and set $I_0=\{d^c_{\alpha_x}\st x\in I\}$. Note that if $x\neq y$, $x,y\in I$, then $d^c_{\alpha_x}$ and $d^c_{\alpha_y}$ are incomparable in $T_{\vec{d}}$ because if $d^c_{\alpha_x}\sub d^c_{\alpha_y}$, say, then $x,y\sub d^c_{\alpha_y}$, so that already $x$ and $y$ would be comparable, contradicting that they both belong to $I$. Hence, $I_0$ is an antichain in $T_{\vec{d}}$ of size $\kappa$. Now let $A=\{\alpha_x\st x\in I\}$. Then $A\sub\kappa$ is an unbounded set such that for all $\alpha<\beta$, both in $A$, $d_\alpha\neq d_\beta\cap\alpha$. But since $\vec{d}$ is not a $\SP{\kappa}{[A]^2,\Nonempty}$-sequence, there is $\{\alpha,\beta\}\in[A]^2$ which is not split by $\vec{d}$ into nonempty sets. By Lemma \ref{lemma:Coherence=NotSplittingIntoNonempty}, this means that $\vec{d}$ coheres on $\{\alpha,\beta\}$, that is, that $d_\alpha=d_\beta\cap\alpha$, a contradiction.
\end{proof} 
% !TEX root = Splitk.tex
\section{Split principles at singular cardinals}
\label{sec:singular}

Previously, we have mostly assumed $\kappa$ to be regular when considering split principles at $\kappa$. We will drop this assumption here and see what can be said about the general case.

Recall that when $\kappa$ is regular, Theorem \ref{theorem:ListSplitsIffItHasNoBranch} characterized exactly what it means for a $\kappa$-list $\vec{d}$ to be a $\SP{\kappa}{\Unbounded}$-sequence: this is the case iff $\vec{d}$ has no cofinal branch. When $\kappa$ is singular, one direction of this implication holds, while the other direction fails completely.

\begin{observation}
\label{observation:OGSplitAtSingular}
Let $\kappa$ be a singular cardinal.
\begin{enumerate}[label=\textnormal{(\arabic*)}]
  \item
  \label{item:(1)=>(2)}
  If $\vec{d}$ is a $\SP{\kappa}{\Unbounded}$-sequence, then $\vec{d}$ has no cofinal branch.
  \item
  \label{item:(2)not=>(1)}
  There is a thin $\kappa$-list $\vec{d}$ that does not have a cofinal branch and isn't a $\SP{\kappa}{\Unbounded}$-sequence.
\end{enumerate}
\end{observation}

\begin{proof}
For \ref{item:(1)=>(2)}, the proof of this implication in Theorem \ref{theorem:ListSplitsIffItHasNoBranch} did not use the regularity assumption on $\kappa$ and goes through.

For \ref{item:(2)not=>(1)}, let $f:\cf(\kappa)\To\kappa$ be monotone and cofinal, with $f(0)\ge\cf(\kappa)$. Define $g:\kappa\To\cf(\kappa)$ by setting $g(\alpha)=\min\{\bar{\alpha}<\cf(\kappa)\st\alpha<f(\bar{\alpha})\}$. Define a $\kappa$-list $\vec{d}$ by setting: $d_\alpha=\{g(\alpha)\}\cap\alpha$. It is easy to see that $T_{\vec{d}}$ has no cofinal branch, so $\vec{d}$ has no cofinal branch. And $\vec{d}$ is not a $\SP{\kappa}{\Unbounded}$ sequence, because for every $\beta<\kappa$, for all sufficiently large $\alpha<\kappa$, $\beta\notin d_\alpha$. It is also clear that $\vec{d}$ is thin.
\end{proof}

In contrast, the characterization given in Theorem \ref{theorem:GeneralABranchesForKappaListsAndGeneralIneffability} of what it means for a $\kappa$-list $\vec{d}$ to be a $\SP{\kappa}{\calA,\Nonempty}$-sequence is very robust and works for singular $\kappa$ as well: this is the case iff $\vec{d}$ does not cohere on any $A\in\calA$, i.e., iff $\vec{d}$ has no $\calA$ branch.

Approaching the connections between split principles and splitting numbers, it is worth pointing out that the original concept of the splitting number at an uncountable cardinal $\kappa$ was generalized from the case $\kappa=\omega$, where a splitting family is a subfamily of $[\omega]^\omega$ which splits each member of $[\omega]^\omega$ into two members of $[\omega]^\omega$. When considering the version at a regular $\kappa$, the set of $\kappa$-sized subsets of $\kappa$ is the same as the collection of all unbounded subsets of $\kappa$. In the singular case, however, these collections are not the same, and both lead to viable versions of the splitting number at a singular cardinal. Traditionally, the $[\kappa]^\kappa$ way was chosen, as in Suzuki \cite{Suzuki:OnSplittingNumbers} and Zapletal \cite{Zapletal:SplittingNumberAtUncountableCards}, but let us look at all the split principles at a singular $\kappa$ that naturally arise in this way:
$\SP{\kappa,\tau}{[\kappa]^\kappa}$, $\SP{\kappa,\tau}{\Unbounded}$ and $\SP{\kappa,\tau}{[\kappa]^\kappa,\Unbounded}$. Note that $\SP{\kappa,\tau}{\Unbounded,[\kappa]^\kappa}$ is obviously inconsistent, and the following implications are evident (see the comments after Definition \ref{definition:GeneralNonsenseOnKappaSplitting}):
\begin{align}
\label{eq:EasyImplications}
\SP{\kappa,\tau}{[\kappa]^\kappa}&\implies\SP{\kappa,\tau}{[\kappa]^\kappa,\Unbounded}\quad \text{and}\\ \notag\SP{\kappa,\tau}{\Unbounded}&\implies\SP{\kappa,\tau}{[\kappa]^\kappa,\Unbounded}.
\end{align}
We will complete the picture later. First, let us observe a corollary of Lemma \ref{lem:AbstractPowerLifting} (which did not assume regularity).

\begin{corollary}
\label{cor:KappaVeryAccessibleImpliesStrongSplit}
If $\kappa$ is a cardinal and there is a $\tau<\cf(\kappa)$ with $2^\tau\geq\kappa$, then
\begin{enumerate}[label=\textnormal{(\arabic*)}]
\item
\label{item:-unbounded-}
$\SP{\kappa,\tau}{\Unbounded}$ holds,
\item
\label{item:-maxcard-}
$\SP{\kappa,\tau}{[\kappa]^\kappa}$ holds, and
\item
\label{item:-stationary-}
$\SP{\kappa,\tau}{\Stationary}$ holds, assuming that $\cf(\kappa)>\omega$.
\end{enumerate}
\end{corollary}

\begin{proof}
With $\calA=\Unbounded_\kappa$, or $\calA=[\kappa]^\kappa$, or $\calA=\Stationary_\kappa$, and $\mathcal{I}=\power{\kappa}\ohne\calA$ we have that $\mathcal{I}$ is a $\cf(\kappa)$-complete (and hence $\tau^+$-complete) ideal that contains all the singletons, so that Lemma \ref{lem:AbstractPowerLifting} applies and immediately gives the result.
\end{proof}

Next, we aim to investigate situations in which split principles transfer from the cofinality of a cardinal to the cardinal itself. With this in mind, we prove a simple, abstract lemma.

\begin{lemma}
\label{lem:AbstractLifting}
Let $\kappa$ be a singular cardinal, $\bkappa=\cf(\kappa)$, and $\tau$ a cardinal.
Let $\calA,\calB\sub\power{\kappa}$ and $\bar{\calA},\bar{\calB}\sub\power{\bkappa}$.
Let $g:\kappa\To\bkappa$ be a partial function, and suppose that the following assumptions are satisfied:
\begin{enumerate}[label=\textnormal{(A\arabic*)}]
	\item
	\label{item:SplitHoldsDownstairs}
	$\SP{\bkappa,\tau}{\bar{\calA},\bar{\calB}}$ holds.
	\item
	\label{item:Deflation}
	If $A\in\calA$, then $g``A\in\bar{\calA}$.
	\item
	\label{item:Closure under supersets}
	$\calB$ is closed under supersets.
	\item
	\label{item:Specializing}
	Let $A\in\calA$, and let $\bA=g``A$. Then there is an $X\sub\bA$ with $X\in\bar{\calA}$ such that whenever $\bar{B}\sub X$ and $\bar{B}\in\bar{\calB}$, then $A\cap(g^{-1}``\bar{B})\in\calB$.
\end{enumerate}
Then $\SP{\kappa,\tau}{\calA,\calB}$ holds.
\end{lemma}

\begin{proof}
By \ref{item:SplitHoldsDownstairs}, let $\vec{\bar{d}}=\seq{\bar{d}_\alpha}{\alpha<\bkappa}$ witness that $\SP{\bkappa,\tau}{\bar{\calA},\bar{\calB}}$ holds. Define the sequence $\vec{d}=\seq{d_\alpha}{\alpha<\kappa}$ by setting $d_\alpha=\bar{d}_{g(\alpha)}$ (let $d_\alpha$ be the empty set if $g(\alpha)$ is undefined). We claim that $\vec{d}$ witnesses that $\SP{\kappa,\tau}{\calA,\calB}$ holds.

To this end, let $A\in\calA$ be given, and set $\bA=g``A$. By \ref{item:Deflation}, $\bA\in\bar{\calA}$. Let $X\sub\bar{A}$, $X\in\bar{\calA}$ be as in \ref{item:Specializing}. Now, let $\beta<\tau$ be such that both $\tilde{X}^+_{\beta,\vec{\bar{d}}}$ and $\tilde{X}^-_{\beta,\vec{\bar{d}}}$ are in $\bar{\calB}$. We want to show that $\beta$ splits $A$ into elements of $\calB$ via the sequence $\vec{d}$.

Since $X$ satisfies the condition in \ref{item:Specializing}, we know that both $A\cap(g^{-1}``\tilde{X}^+_{\beta,\vec{\bar{d}}})$ and $A\cap(g^{-1}``\tilde{X}^-_{\beta,\vec{\bar{d}}})$ are in $\calB$.

But we have that $A\cap(g^{-1}``\tilde{X}^+_{\beta,\vec{\bar{d}}})\sub \tilde{A}^+_{\beta,\vec{d}}$, because if $\alpha\in A\cap(g^{-1}``\tilde{X}^+_{\beta,\vec{\bar{d}}})$, then $\alpha\in A$ and $g(\alpha)\in \tilde{X}^+_{\beta,\vec{\bar{d}}}$, which implies that $\beta\in\bar{d}_{g(\alpha)}=d_\alpha$. Thus, $\alpha\in A$ and $\beta\in d_\alpha$, which means that $\alpha\in\tilde{A}^+_\beta$. So, since by assumption \ref{item:Closure under supersets}, $\calB$ is closed under supersets, it follows that $\tilde{A}^+_{\beta,\vec{d}}\in\calB$.

Similarly, $A\cap(g^{-1}``\tilde{X}^-_{\beta,\vec{\bar{d}}}) \sub\tilde{A}^-_{\beta,\vec{d}}\in\calB$.

This shows that $\vec{d}$ witnesses that $\SP{\kappa,\tau}{\calA,\calB}$ holds, as claimed.
\end{proof}

%Moreover, the thinness of a split sequence transfers from the cofinality of a cardinal to the cardinal itself. Namely, it is not difficult to see that in the proof of Lemma \ref{lem:AbstractLifting}, we have that for $\xi<\tau$,
%	$\left| \set{ d_\alpha \cap \xi}{ \alpha<\kappa} \right| \leq \left| \set{ \bar{d_\alpha} \cap \xi}{ \alpha<\bar\kappa} \right|$ since $d_\alpha=\bar{d}_{g(\alpha)}$.

\begin{lemma}
\label{lem:ConcreteLifting}
Let $\kappa$ be a singular cardinal, $\bkappa=\cf(\kappa)$, and let $\tau$ be a cardinal.
\begin{enumerate}[label=\textnormal{(\arabic*)}]
    \item
    \label{item:UnboundedLifting}
    If $\SP{\bkappa,\tau}{\Unbounded}$ holds, then so does $\SP{\kappa,\tau}{\Unbounded}$.
    \item
    \label{item:MaxCardLifting}
    If $\SP{\bkappa,\tau}{[\bkappa]^{\bkappa}}$ holds, then so does $\SP{\kappa,\tau}{[\kappa]^\kappa}$.
    \item
    \label{item:StationaryLifting}
    If $\SP{\bkappa,\tau}{\Stationary}$ holds, then so does $\SP{\kappa,\tau}{\Stationary}$.
\end{enumerate}
\end{lemma}

\noindent\emph{Note:} In other words, $\mathfrak{s}_{\Unbounded}(\kappa)\le\mathfrak{s}_{\Unbounded}(\cf(\kappa))$,
$\mathfrak{s}_{[\kappa]^\kappa}(\kappa)\le\mathfrak{s}_{[\cf(\kappa)]^{\cf(\kappa)}}(\cf(\kappa))$ and for $\cf(\kappa)>\omega$, $\mathfrak{s}_{\Stationary}(\kappa)\le\mathfrak{s}_{\Stationary}(\cf(\kappa))$.

\begin{proof}
We will use Lemma \ref{lem:AbstractLifting} in each case.

For \ref{item:UnboundedLifting}: let $f:\bkappa\To\kappa$ be strictly increasing, cofinal and continuous, with $f(0)=0$, say. Define $g:\kappa\To\bkappa$ by setting $g(\alpha)=\min\{\balpha<\bkappa\st\alpha<f(\balpha+1)\}$. That is, $g(\alpha)$ is the unique $\balpha$ such that $f(\balpha)\le\alpha<f(\balpha+1)$. We use Lemma \ref{lem:AbstractLifting} with this function $g$, $\calA=\calB=\Unbounded_\kappa$ and $\bar{\calA}=\bar{\calB}=\Unbounded_{\bkappa}$. Condition \ref{item:SplitHoldsDownstairs} holds by assumption. It is obvious that if $A\sub\kappa$ is unbounded, then $g``A\sub\bkappa$ is unbounded in $\bkappa$, that is, condition \ref{item:Deflation} holds. Condition \ref{item:Closure under supersets} is trivially satisfied, and condition \ref{item:Specializing} holds with $X=\bar{A}=g``A$.

For \ref{item:MaxCardLifting}: let $f$ and $g$ be as above. We let $\calA=\calB=[\kappa]^\kappa$ and $\bar{\calA}=\bar{\calB}=[\bkappa]^{\bkappa}$. Let us check the conditions of Lemma \ref{lem:AbstractLifting}. Condition \ref{item:SplitHoldsDownstairs} again holds by assumption. If $A\sub\kappa$ has cardinality $\kappa$, then it is unbounded, and so, $g``A\sub\bkappa$ is unbounded in $\bkappa$, and hence is in $[\bkappa]^{\bkappa}=\Unbounded_\bkappa$, as $\bkappa$ is regular. So condition \ref{item:Deflation} is satisfied. Condition \ref{item:Closure under supersets} is obvious. It remains to check condition \ref{item:Specializing}. So let $A\in\calA$, and set $\bar{A}=g``A$. So $\bar{A}$ is an unbounded subset of $\bkappa$. Since $A$ has size $\kappa$, it is straightforward to find a set $X\sub\bar{A}$, unbounded in $\bkappa$, such that the restriction of the function $\balpha\mapsto\card{A\cap[f(\balpha),f(\balpha+1))}$ to $X$ is strictly increasing and cofinal in $\kappa$. Then, if $\bar{B}\sub X$ is unbounded, clearly, $A\cap(g^{-1}``\bar{B})$ has cardinality $\kappa$.

For \ref{item:Stationary}:
By our assumption, $\SP{\bkappa,\tau}{\Stationary}$ holds. Note that this implies that $\bkappa$ is uncountable, because if $\bkappa=\omega$, then the stationary subsets of $\bkappa$ are those that contain a tail, but no set that contains a tail can be split into two such sets. So $\kappa$ has uncountable cofinality. Let $f:\bkappa\To\kappa$ be as before, let $g=f^{-1}$, and set $\calA=\calB=\Stationary_\kappa$ and $\bar{\calA}=\bar{\calB}=\Stationary_\bkappa$. Of the conditions of Lemma \ref{lem:AbstractLifting}, \ref{item:SplitHoldsDownstairs} and \ref{item:Closure under supersets} are again clear. For the remaining conditions, note that $\ran(f)$ is club in $\kappa$. So if $A\sub\kappa$ is stationary, then so is $A\cap\ran(f)$, and it is easy to see that $f^{-1}``A=g``A$ is stationary in $\bkappa$, showing \ref{item:Deflation}.
To verify condition \ref{item:Specializing}, let $A\in\calA$, and let $\bar{A}=g``A=f^{-1}``A$. Set $X=\bar{A}$. Now if $\bar{B}\sub X$ is stationary, then $f``\bar{B}=g^{-1}``\bar{B}$ is stationary in $\kappa$. But of course, since $\bar{B}\sub X=\bar{A}=f^{-1}``A$, $f``\bar{B}=A\cap f``\bar{B}=A\cap g^{-1}``\bar{B}$. So this latter set is stationary in $\kappa$, as wished.
\end{proof}

\begin{corollary}
\label{cor:WeaklyCompactCofinality}
Let $\kappa$ be a singular cardinal.
\begin{enumerate}[label=\textnormal{(\arabic*)}]
\item
\label{item:CFWC}
If $\cf(\kappa)$ is uncountable but not weakly compact, then $\Split{\kappa}$ and $\SP{\kappa}{[\kappa]^\kappa}$ hold.
\item
\label{item:CFIE}
If $\cf(\kappa)=\omega$, then $\SP{\kappa}{\Stationary}$ fails. If $\cf(\kappa)$ is uncountable but not ineffable, then $\SP{\kappa}{\Stationary}$ holds.
\end{enumerate}
\end{corollary}

\begin{proof}
For \ref{item:CFWC}, under the stated assumptions, $\SP{\cf(\kappa)}{\Unbounded}$ (which is the same as $\SP{\cf(\kappa)}{[\cf(\kappa)]^{\cf(\kappa)}}$) holds by Corollary \ref{corollary:SplitKappaIffNotWC}, so the claim follows from Lemma \ref{lem:ConcreteLifting}.\ref{item:UnboundedLifting} and \ref{item:MaxCardLifting}.

For \ref{item:CFIE}, if $\cf(\kappa)=\omega$, then a set $A\sub\kappa$ is stationary iff it contains some tail $[\alpha,\kappa)$. Clearly, $A$ cannot be split into two sets of this form, so $\SP{\kappa}{\Stationary}$ fails. If $\cf(\kappa)$ is uncountable but not ineffable, then $\SP{\cf(\kappa)}{\Stationary}$ holds by \ref{theorem:CharacterizationOfIneffability}, and this implies the claim by Lemma \ref{lem:ConcreteLifting}.\ref{item:StationaryLifting}.
\end{proof}

\begin{corollary}
\label{cor:TransferDependingOnPower}
Let $\kappa$ be a singular cardinal. Then:
\begin{enumerate}[label=\textnormal{(\arabic*)}]
    \item
    \label{item:UnboundedLiftingToPower}
    %If $\SP{\bkappa,\tau}{\Unbounded}$ holds, then so does
    $\SP{\kappa,2^{\cf(\kappa)}}{\Unbounded}$ holds. \\
    In particular, if $2^{\cf(\kappa)}\le\kappa$, then $\SP{\kappa}{\Unbounded}$ holds.
    \item
    \label{item:MaxCardLiftingToPower}
    %If $\SP{\bkappa,\tau}{[\bkappa]^{\bkappa}}$ holds, then so does
    $\SP{\kappa,2^{\cf(\kappa)}}{[\kappa]^\kappa}$ holds. \\
    In particular, if $2^{\cf(\kappa)}\le\kappa$, then $\SP{\kappa}{[\kappa]^\kappa}$ holds.
    \item
    \label{item:StationaryLiftingToPower}
    %If $\SP{\bkappa,\tau}{\Stationary}$ holds, then so does
    $\SP{\kappa,2^{\cf(\kappa)}}{\Stationary}$ holds, assuming that $\cf(\kappa)>\omega$. \\
    In particular, if in addition $2^{\cf(\kappa)}\le\kappa$, then $\SP{\kappa}{\Stationary}$ holds.
\end{enumerate}
\end{corollary}

\begin{proof}
This is because, letting $\bkappa=\cf(\kappa)$, $\SP{\bkappa,2^\bkappa}{\Unbounded}$, $\SP{\bkappa,2^\bkappa}{[\bkappa]^\bkappa]}$ and $\SP{\bkappa,2^\bkappa}{\Stationary}$ hold, as in each case, there is a splitting family of size $2^\bkappa$. The claims now follow from Lemma \ref{lem:ConcreteLifting} (using the fact that the families under consideration are tail sets).
\end{proof}

We now turn towards transferring the split principle downward from a cardinal to its cofinality. It turns out that \ref{item:UnboundedLifting} and \ref{item:StationaryLifting} of Lemma \ref{lem:ConcreteLifting} can be reversed. We will point out later that this is not always so for \ref{item:MaxCardLifting}.

\begin{lemma} \label{lem:Lowering}
Let $\bkappa,\kappa$ and $\tau$ be cardinals, $\bar{\calA},\bar{\calB}\sub\power{\bkappa}$, $\calA,\calB\sub\power{\kappa}$ and $f:\bkappa\To\kappa$ an injective function such that for all $\bar{A}\in\bar{\calA}$, $f``\bar{A}\in\calA$, and further, if $B\in\calB$ and $B\sub f``\bar{A}$, then $f^{-1}``B\in\bar{\calB}$. Then $\SP{\kappa,\tau}{\calA,\calB}\implies\SP{\bkappa,\tau}{\bar{\calA},\bar{\calB}}$.

In particular, if $\bkappa=\cf(\kappa)$, then
\begin{enumerate}[label=\textnormal{(\arabic*)}]
\item
\label{item:UnboundedLowering}
If $\SP{\kappa,\tau}{\Unbounded}$ holds, then so does $\SP{\bkappa,\tau}{\Unbounded}$.
\item
\label{item:StationaryLowering}
If $\bkappa\ge\omega_1$ and $\SP{\kappa,\tau}{\Stationary}$ holds, then so does $\SP{\bkappa,\tau}{\Stationary}$.
\end{enumerate}
\end{lemma}

%\begin{proof}
%Let us prove the general claim first. Let $\calF\sub\power{\kappa}$ be an $(\calA,\calB)$-splitting family of cardinality $\tau$. Then $\bar{\calF}=\{f^{-1}``S\st S\in\calF\}$ is a family of subsets of $\bkappa$ of cardinality at most $\tau$, and it is $(\bar{\calA},\bar{\calB})$-splitting, because given $\bar{A}\in\bar{\calA}$, $A=f``\bar{A}\in\calA$, so there is an $S\in\calF$ such that both $B_0=A\cap S$ and $B_1=A\ohne S$ are in $\calB$. Since $B_0,B_1\sub A=f``\bar{A}$, it follows that both $\bar{B}_0=f^{-1}``B_0$ and $\bar{B}_1=f^{-1}``B_1$ are in $\bar{\calB}$. Since $\bar{S}=f^{-1}``S\in\bar{\calF}$, $\bar{B}_0=\bar{A}\cap\bar{S}$ and $\bar{B}_1=\bar{A}\ohne\bar{S}$, this shows that $\bar{\calF}$ is  $(\bar{\calA},\bar{\calB})$-splitting.
%
%Claim \ref{item:UnboundedLowering} now follows by letting $f:\bkappa\To\kappa$ be an increasing, cofinal function, and claim \ref{item:StationaryLowering} follows by choosing $f$ so that it is in addition continuous.
%\end{proof}

%[Alternative proof that makes it more visible that the width of the transferred sequence is small]

\begin{proof}
To prove the general claim, let $\vec{d}=\seq{d_\alpha}{\alpha<\kappa}$ be a $\SP{\kappa,\tau}{\calA,\calB}$-sequence. Define $\vec{\bar{d}}=\seq{\bar{d}_\balpha}{\balpha<\bkappa}$ by $\bar{d}_\balpha=d_{f(\balpha)}$. Then $\vec{\bar{d}}$ is a $\SP{\bkappa,\tau}{\bar{\calA},\bar{\calB}}$-sequence, because given $\bar{A}\in\bar{\calA}$, $A=f``\bar{A}\in\calA$, so there is a $\beta<\tau$ such that both $\tilde{A}^+_{\beta,\vec{d}}$ and $\tilde{A}^-_{\beta,\vec{d}}$ are in $\calB$. Both of these sets are subsets of $f``\bA$, so by assumption, their pointwise preimages are in $\bar{\calA}$. But $f^{-1}``\tilde{A}^+_{\beta,\vec{d}}=\tilde{\bar{A}}^+_{\beta,\vec{\bar{d}}}$ and $f^{-1}``\tilde{A}^-_{\beta,\vec{d}}=\tilde{\bar{A}}^-_{\beta,\vec{\bar{d}}}$, so $\beta$ splits $\bar{A}$ with respect to $\vec{\bar{d}}$.
%%%%[to check this, $\balpha\in f^{-1}``\tilde{A}^+_{\beta,\vec{d}}$ iff $f(\balpha)\in\tilde{A}^+_{\beta,\vec{d}}$ iff $f(\balpha)\in A$ and $\beta\in d_{f(\balpha)}$ iff $\balpha\in\bar{A}$ and $\beta\in\bar{d}_{\balpha}$ iff $\balpha\in\tilde{\bar{A}}^+_{\beta,\vec{\bar{d}}}$. So I guess we want $f$ to be injective. ]

Claim \ref{item:UnboundedLowering} now follows by letting $f:\bkappa\To\kappa$ be an increasing, cofinal function, and claim \ref{item:StationaryLowering} follows by choosing $f$ so that it is in addition continuous.
\end{proof}

Thus, the computation of $\sn_{\Unbounded}(\kappa)$ and $\sn_{\Stationary}(\kappa)$ for singular $\kappa$ completely reduces to the regular case.

\begin{corollary}
Let $\kappa$ be a singular cardinal.
\begin{enumerate}[label=\textnormal{(\arabic*)}]
\item $\mathfrak{s}_{\Unbounded}(\kappa)=\mathfrak{s}_{\Unbounded}(\cf(\kappa))=\mathfrak{s}(\cf(\kappa))$.
\item If $\cf(\kappa)\ge\omega_1$, then
$\mathfrak{s}_{\Stationary}(\kappa)=\mathfrak{s}_{\Stationary}(\cf(\kappa))$.
\end{enumerate}
\end{corollary}

\begin{corollary}
For cardinals $\kappa$ and $\tau$, $\SP{\kappa,\tau}{\Unbounded}$ implies $\SP{\kappa,\tau}{[\kappa]^\kappa}$. In other words, $\mathfrak{s}(\kappa)\le\mathfrak{s}_\Unbounded(\kappa)$.
\end{corollary}

\begin{proof}
Let $\bkappa=\cf(\kappa)$. Then $\SP{\kappa,\tau}{\Unbounded}$ implies $\SP{\bkappa,\tau}{\Unbounded}$, which is equivalent to $\SP{\bkappa,\tau}{[\bkappa]^\bkappa}$, and that implies $\SP{\kappa,\tau}{[\kappa]^\kappa}$.
\end{proof}

Thus, we have completed the picture begun in (\ref{eq:EasyImplications}), for a singular cardinal $\kappa$:
\begin{equation}\label{eq:LessEasyImplications}
\SP{\kappa,\tau}{\Unbounded}\implies\SP{\kappa,\tau}{[\kappa]^\kappa}\implies\SP{\kappa,\tau}{[\kappa]^\kappa,\Unbounded}
\end{equation}
(and $\SP{\kappa,\tau}{\Unbounded,[\kappa]^\kappa}$ fails always).

Recall that the width of a sequence, as defined in Definition \ref{def:RangeWidthSpread}, is the size of the range of the sequence. The lifted split sequences in Lemma \ref{lem:ConcreteLifting} have width $\bkappa=\cf(\kappa)$ (as can be seen from their definition in the proof of Lemma \ref{lem:AbstractLifting}, the width of the lifted sequence is going to be at most $\bkappa$, but by Observation \ref{obs:Width>=Completeness} the width must also be at least $\bkappa$). The same is true of the pushed down split sequences of Lemma \ref{lem:Lowering} (in this case, since the pushed down sequence is a $\bkappa$-sequence, it is trivial that its width is at most $\bkappa$, but again by Observation \ref{obs:Width>=Completeness}, it must also be at least $\bkappa$.) As mentioned before, the downward transfer Lemma \ref{lem:Lowering} does not work for $\SP{\kappa,\tau}{[\kappa]^\kappa}$, that is, this principle does not imply $\SP{\bkappa,\tau}{[\bkappa]^\bkappa}$. This is the obstacle to answering the main question of \cite{Zapletal:SplittingNumberAtUncountableCards}, whether it is consistent that $\sn_{[\aleph_\omega]^{\aleph_\omega}}(\aleph_\omega)>\aleph_\omega$, or more generally, that $\sn_{[\kappa]^\kappa}(\kappa)>\kappa$, for a singular cardinal $\kappa$. Thus, it is worth looking more closely at the conditions under which a downward transfer of $\SP{\kappa,\tau}{[\kappa]^\kappa}$-sequences is possible. The concept of thinness, as defined for Split sequences in Definition \ref{def:TwoCardinalSplit}, will yet again prove its usefulness.

%\begin{definition}
%\label{def:Svelte}
%A $\SP{\kappa,\tau}{\calA,\calB}$-sequence $\vec{d}$ is \emph{svelte} if for every $\theta<\tau$, and every $\lambda<\kappa$, there is an $a\sub\theta$ such that $\{\alpha<\kappa\st d_\alpha\cap\theta=a\}$ has cardinality at least $\lambda$.
%\end{definition}
%
%\begin{observation}
%\label{obs:SmallWidthOrThinImpliesSvelte}
%Every thin $\SP{\kappa,\tau}{\calA,\calB}$-sequence is svelte.
%\end{observation}
%
%\begin{proof}
%Let $\vec{d}$ be a sequence as described, and let $\theta<\tau$. Let $\gamma<\kappa$ be the cardinality of $D_\theta=\{d_\alpha\cap\theta\st\alpha<\kappa\}$. For $a\in D_\theta$, let $s(a)=s_\theta(a)=\{\alpha<\kappa\st d_\alpha\cap\theta=a\}$. It cannot be that there is a cardinal $\lambda<\kappa$ such that for all $a\in D_\theta$, $\card{s_\theta(a)}<\lambda$, as in that case, $\kappa=\bigcup_{a\in D_\theta}s_\theta(a)$ would have cardinality at most $\gamma\cdot\lambda$. Thus, $\vec{d}$ is svelte.
%\end{proof}

\begin{lemma}
\label{lem:DownwardTransferForSvelteSequences}
Let $\kappa$ be a singular cardinal, and let $\bkappa=\cf(\kappa)$. Then thin $\SP{\kappa,\tau}{[\kappa]^\kappa}$ implies $\SP{\bkappa,\tau}{[\bkappa]^\bkappa}$.
\end{lemma}

\noindent\emph{Note:} A slightly weaker condition than thinness suffices.

\begin{proof}
Let $\vec{d}$ be a thin $\SP{\kappa,\tau}{[\kappa]^\kappa}$-sequence. It then satisfies the following condition, alluded to in the note:

\begin{itemize}
\item[$(*)$]
%\label{item:Svelte}
For every $\theta<\tau$ and every $\lambda<\kappa$, there is an $a\sub\theta$ such that the set $\{\alpha<\kappa\st d_\alpha\cap\theta=a\}$ has cardinality at least $\lambda$.
\end{itemize}

To see this, let $\theta<\tau$, and let $\gamma<\kappa$ be the cardinality of $D_\theta=\{d_\alpha\cap\theta\st\alpha<\kappa\}$. For $a\in D_\theta$, let $s(a)=s_\theta(a)=\{\alpha<\kappa\st d_\alpha\cap\theta=a\}$. It cannot be that there is a cardinal $\lambda<\kappa$ such that for all $a\in D_\theta$, $\card{s_\theta(a)}<\lambda$, as in that case, $\kappa=\bigcup_{a\in D_\theta}s_\theta(a)$ would have cardinality at most $\gamma\cdot\lambda$, a contradiction.

Now let $g:\bkappa\To\kappa$ be increasing and cofinal. For $\theta<\tau$ and $a\sub\theta$, let
\[s_\theta(a)=\{\alpha<\kappa\st d_\alpha\cap\theta=a\},\]
and define $f:\bkappa\To\kappa$ so that for all $\balpha<\bkappa$,
\[\card{s_{g(\balpha)}(d_{f(\balpha)}\cap g(\balpha))}\ge g(\balpha).\]
By $(*)$, such an $f(\balpha)$ always exists. Define
\(\bar{d}_\balpha=d_{f(\balpha)}\)
for $\balpha<\bkappa$. We claim that $\vec{\bar{d}}=\seq{\bar{d}_{\balpha}}{\balpha<\bkappa}$ is a $\SP{\bkappa,\tau}{[\bkappa]^\bkappa}$-sequence, which is the same as a $\SP{\bkappa,\tau}{\Unbounded}$-sequence, since $\bkappa$ is regular.

Let $\bar{A}\sub\bkappa$ be unbounded. For $\balpha\in\bA$, let
\[\tilde{s}(\balpha)=\text{the shortest initial segment of $s_{g(\balpha)}(d_{f(\balpha)}\cap g(\balpha))$ that has size $g(\balpha)$}\]
and define
\[A=\bigcup_{\balpha\in\bA}\tilde{s}(\balpha).\]
Since $\card{\tilde{s}(\balpha)}=g(\balpha)$, $A$ has cardinality $\kappa$, and hence, there is a $\beta<\tau$ which splits $A$ into elements of $[\kappa]^\kappa$ with respect to $\vec{d}$. We claim that $\beta$ also splits $\bA$ into unbounded subsets of $\bkappa$ with respect to $\vec{\bar{d}}$.

To this end, we show that $\tilde{\bA}^+_{\beta,\vec{\bar{d}}}$ is unbounded in $\bkappa$. So fix a $\bdelta<\bkappa$. We have to find an $\balpha>\bdelta$ in $\tilde{\bA}^+_{\beta,\vec{\bar{d}}}$. Let $\bbeta<\bkappa$ be large enough that $g(\bbeta)>\beta$, and let
\[X=\bigcup_{\balpha\le\bdelta\cup\bbeta}\tilde{s}(\balpha).\]
Since $\card{X}<\kappa$, we can pick $\alpha\in\tilde{A}^+_{\beta,\vec{d}}\ohne X$.
This means that $\alpha\in A$ and $\beta\in d_\alpha$. As $\alpha\in A$, there is an $\balpha\in\bA$ such that $\alpha\in\tilde{s}(\balpha)$. So $d_{f(\balpha)}\cap g(\balpha)=d_\alpha\cap g(\balpha)$. Since by definition, $\bard_\balpha=d_{f(\balpha)}$, this means that
\(\bard_\balpha\cap g(\balpha)=d_\alpha\cap g(\balpha).\)
We know further that $\alpha\notin X$. This implies that $\balpha>\bdelta$ and $\balpha>\bbeta$. The latter implies that $g(\balpha)>g(\bbeta)>\beta$. Thus, we have that
\(\beta\in d_\alpha\cap g(\balpha)=\bard_\balpha\cap g(\balpha)\)
and in particular, $\balpha\in\tilde{\bA}^+_{\beta,\vec{\bard}}$. Since $\balpha>\bdelta$, $\balpha$ is as wished.

An entirely analogous argument shows that $\tilde{\bar{A}}^-_{\beta,\vec{\bar{d}}}$ is unbounded in $\bkappa$.
\end{proof}

%Let us remark that the previous lemma is an instance where it is fruitful to think about splitting families in terms of splitting sequences, since the concept of the thinness or the width of a splitting sequence is very natural, while it is not so obvious for a splitting family.

\begin{corollary}
\label{cor:ThinSplitImpliesSmallWidth}
Let $\kappa$ be singular, $\bkappa=\cf(\kappa)$, and let $\tau$ be a cardinal. If thin $\SP{\kappa,\tau}{[\kappa]^\kappa}$ holds, then there is a $\SP{\kappa,\tau}{[\kappa]^\kappa}$-sequence of width $\bkappa$.
\end{corollary}
\begin{proof}
This is because by Lemma \ref{lem:DownwardTransferForSvelteSequences}, $\SP{\bkappa,\tau}{[\bkappa]^\bkappa}$ holds, and transferring such a sequence back via Lemma \ref{lem:ConcreteLifting}, using the method of the proof of Lemma \ref{lem:AbstractLifting}, produces a $\SP{\kappa,\tau}{[\kappa]^\kappa}$-sequence of width at most $\bkappa$, but it then must have width $\bkappa$ by Observation \ref{obs:Width>=Completeness}.
\end{proof}

The following corollary is essentially due to Zapletal \cite{Zapletal:SplittingNumberAtUncountableCards}, where the proof is given for the case $\kappa=\aleph_\omega$. It is a direct consequence of the downward transfer of thin split sequences.

\begin{corollary}
If $\kappa$ is a singular strong limit cardinal, then $\sn(\kappa)=\sn(\cf(\kappa))$.
\end{corollary}

\begin{proof}
Let $\bkappa=\cf(\kappa)$. Then by Corollary \ref{cor:TransferDependingOnPower}.\ref{item:MaxCardLiftingToPower}, $\SP{\kappa,2^{\bkappa}}{[\kappa]^\kappa}$ holds. So $\sn(\kappa)\le 2^\bkappa<\kappa$, and similarly, $\sn(\bkappa)\le 2^\bkappa<\kappa$. But for $\tau<\kappa$, $\SP{\kappa,\tau}{[\kappa]^\kappa}$ holds iff $\SP{\bkappa,\tau}{[\bkappa]^\bkappa}$ holds, because of the upward transfer provided by Lemma \ref{lem:ConcreteLifting} and the downward transfer due to Lemma \ref{lem:DownwardTransferForSvelteSequences} which applies, because $\kappa$ is a strong limit cardinal, so that every $\SP{\kappa,\tau}{[\kappa]^\kappa}$-sequence is thin.
\end{proof}

Using a result attributed to Shizuo Kamo as a black box, we can answer two questions we had: the principle $\SP{\kappa}{\Unbounded}$ can fail at a singular cardinal $\kappa$, and in fact we can separate $\SP{\kappa}{\Unbounded}$ from $\SP{\kappa}{[\kappa]^\kappa}$, showing that the first implication in (\ref{eq:LessEasyImplications}) cannot be reversed. Unfortunately, we were not able to obtain access to a proof of Kamo's result.

\begin{corollary}
It is consistent that $\SP{\aleph_\omega}{\Unbounded}$ fails and $\SP{\aleph_\omega}{[\aleph_\omega]^{\aleph_\omega}}$ holds.
\end{corollary}

\begin{proof}
According to \cite[Lemma 21]{Zapletal:SplittingNumberAtUncountableCards},
Kamo showed that it is consistent that $\mathfrak{s}(\aleph_0)>\aleph_\omega$ and $\mathfrak{s}(\aleph_\omega)=\aleph_1$. No reference was given for this result. The only reference to work of Kamo given in \cite{Zapletal:SplittingNumberAtUncountableCards} is the unpublished preprint \cite{Kamo:SplittingNumbers}, which we were finally able to retrieve, but it does not contain this result. In any case, in a model like this, $\mathfrak{s}_{\Unbounded}(\aleph_\omega)=\mathfrak{s}(\aleph_0)>\aleph_\omega$, so $\SP{\aleph_\omega}{\Unbounded}$ fails. Moreover, since $\mathfrak{s}(\aleph_\omega)=\aleph_1$, it follows that $\SP{\aleph_\omega}{[\aleph_\omega]^{\aleph_\omega}}$ holds.
\end{proof}

Note that the model of the previous proof shows that \ref{item:MaxCardLifting} of Lemma \ref{lem:ConcreteLifting} cannot be reversed, for in that model, $\SP{\aleph_\omega,\aleph_\omega}{[\aleph_\omega]^{\aleph_\omega}}$ holds, yet $\SP{\aleph_0,\aleph_\omega}{[\aleph_0]^{\aleph_0}}$ fails. This is why in the following theorem, our methods don't allow us to conclude the consistency of the failure of $\SP{\kappa}{[\kappa]^\kappa}$ at a singular $\kappa$  (i.e., $\sn(\kappa)>\kappa$), which is the main open problem of \cite{Zapletal:SplittingNumberAtUncountableCards}. But we are able to resolve the corresponding questions about $\SP{\kappa}{\Unbounded}$, $\SP{\kappa}{\Stationary}$ and \emph{thin} $\SP{\kappa}{[\kappa]^\kappa}$.

\begin{theorem}
\label{thm:SP(stat)CanFailAtSingular}
Assuming the consistency of a supercompact cardinal, it is consistent that there is a singular cardinal $\kappa$ (of uncountable cofinality) such that $\SP{\kappa}{\Stationary}$, $\SP{\kappa}{\Unbounded}$ and thin $\SP{\kappa}{[\kappa]^\kappa}$ fail.

In fact, for any $\tau$, we can force that $\SP{\kappa,\tau}{\Stationary}$, $\SP{\kappa,\tau}{\Unbounded}$ and thin $\SP{\kappa,\tau}{[\kappa]^\kappa}$ fail.
\end{theorem}

\begin{proof}
Starting in a model with a Laver-indestructible supercompact cardinal $\bkappa$, and given $\tau$, let us verify the last claim first. Pick any singular cardinal $\kappa>\bkappa$ with $\cf(\kappa)=\bkappa$. Let $\lambda>\kappa,\tau$ be regular. By Theorem \ref{thm:PushingUpStatUnbddSplittingNumber}, there is a cofinality-preserving forcing extension $\V[G]$ in which $\sn_{\Stationary,\Unbounded}(\bkappa)=\lambda>\tau$. This means that $\SP{\bkappa,\tau}{\Stationary,\Unbounded}$ fails, and this in turn implies further that $\SP{\bkappa,\tau}{\Stationary}$ fails. It also implies that $\SP{\bkappa,\tau}{\Unbounded}$ fails, which is the same as $\SP{\bkappa,\tau}{[\bkappa]^\bkappa}$.
In $\V[G]$, it is still the case that $\cf(\kappa)=\bkappa>\omega$, and so, by (the contrapositive of) Lemma \ref{lem:Lowering}, it must be that $\SP{\kappa,\kappa}{\Stationary}$, which is equivalent to $\SP{\kappa}{\Stationary}$, fails. For the same reason, $\SP{\kappa}{\Unbounded}$ fails in $\V[G]$. And by (the contrapositive of) Lemma \ref{lem:DownwardTransferForSvelteSequences}, there can't be a thin $\SP{\kappa,\tau}{[\kappa]^\kappa}$-sequence in $\V[G]$.

The first claim follows by letting $\tau=\kappa$, using Observation \ref{observation:IndependentOfInitialSegmentsGivesSplitEquivalentToGeneralSplit}, which is justified as all the sets we split into here are tail sets.
\end{proof}

Let us add that if $\calF$ is an $(\calA,\calB)$-splitting family, where $\calA,\calB\sub\power{\kappa}$, then we can make sense of $\calF$ being thin. Namely, if $\card{\calF}=\tau$ and we fix an enumeration $\calF=\{A_\alpha\st\alpha<\tau\}$, and define $d_\alpha=\{\xi<\tau\st\alpha\in A_\xi\}$ for $\alpha<\kappa$, then we can define that $\calF$ is thin iff $\vec{d}$ is thin. We can also define the width of $\calF$ to be the width of $\vec{d}$. So in the model of the previous theorem, the smallest size of a thin $[\kappa]^\kappa$-splitting family greater than $\tau$. Lemma \ref{lem:ConcreteLifting} shows that there is always a thin $[\kappa]^\kappa$-splitting family, in fact there is always one of width $\cf(\kappa)$.

There are many open questions about splitting numbers of uncountable cardinals, but one that hasn't been asked before seems to be the following.

\begin{question}
Let $\kappa$ be an uncountable regular cardinal. Must it be that \[\sn_\Stationary(\kappa)\le\sn(\kappa)\text{?}\]%
In other words, is it true that
\[\SP{\kappa,\tau}{\Unbounded}\implies\SP{\kappa,\tau}{\Stationary}\text{?}\]
\end{question}%
So this question asks whether Corollary \ref{cor:StationarySNbelowUnboundedSN} can be extended past $\kappa^+$.

\bibliography{Spliterature}
\bibliographystyle{plain}
\end{document}